\documentclass[11pt]{amsart}

\usepackage{graphicx}
\usepackage{amsfonts}
\usepackage{epsf}
\usepackage{amssymb}
\usepackage{amsmath}
\usepackage{amscd}
\usepackage{hyperref}
\usepackage{color}
\usepackage{bbm}
\usepackage{pinlabel}
\usepackage{subcaption}
\usepackage{tikz}

\newtheorem{theorem}{Theorem}[section]
\newtheorem{proposition}[theorem]{Proposition}
\newtheorem{lemma}[theorem]{Lemma}

\newtheorem{corollary}[theorem]{Corollary}

\theoremstyle{remark}
\newtheorem{definition}[theorem]{Definition}
\newtheorem{example}[theorem]{Example}

\captionsetup[subfigure]{labelfont=rm}

\hoffset=-13mm
\voffset=-10mm
\textwidth 155mm
\textheight 22cm

\begin{document}

\title[]{Comparing nonorientable three genus and \\nonorientable four genus of torus knots}

\author{Stanislav Jabuka}
\author{Cornelia A. Van Cott}

\maketitle

\begin{abstract}
We compare the values of the nonorientable three genus (or, crosscap number) and the nonorientable four genus of torus knots. In particular, let $T(p,q)$ be any torus knot with $p$ even and $q$ odd. The difference between these two invariants on $T(p,q)$ is at least $\frac{k}{2}$, where $p = qk + a $ and $0< a < q$ and $k\geq0$. Hence, the difference between the two invariants on torus knots $T(p,q)$ grows arbitrarily large for any fixed odd $q$, as $p$ ranges over values of a fixed congruence class modulo $q$. This contrasts with the orientable setting. Seifert proved that the orientable three genus of the torus knot $T(p,q)$ is $\frac{1}{2}(p-1)(q-1)$, and Kronheimer and Mrowka later proved that the orientable four genus of $T(p,q)$ is also this same value. 
\end{abstract}



\section{Introduction}
The {\em nonorientable three genus} (or, {\em crosscap number}) $\gamma_3(K)$ of a knot $K$ in $S^3$ is the smallest first Betti number of all nonorientable surfaces $\Sigma$ embedded in $S^3$ and with $\partial \Sigma =K$. This invariant was first defined and studied by Clark \cite{Clark:1978} in 1978. Similarly, for any knot $K$, the {\em nonorientable smooth four genus $\gamma_4(K)$} is defined as the smallest first Betti number of any nonorientable surface $F$ smoothly and properly embedded in the 4-ball and with $\partial F = K$. This knot invariant $\gamma_4(K)$ was introduced by Murakami and Yasuhara \cite{MurakamiYasuhara:2000} in the year 2000. Since the interior of any surface in $S^3$ can always be pushed into $B^4$, we see that 
$$1\leq \gamma_4(K) \le \gamma_3(K). $$

In this paper, we will discuss and compare the value of these two invariants $\gamma_3$ and $\gamma_4$ on torus knots $T(p,q)$. Unless otherwise noted, throughout our discussion we use the convention that if $pq$ is even, then we take $p$ even and $q$ odd. If $pq$ is odd, then we take $p>q$. 

In the analogous situation where one looks for {\em orientable} surfaces of minimal genus, Seifert~\cite{Seifert} proved that the orientable three genus of the torus knot $T(p,q)$ is $\frac{1}{2}(p-1)(q-1)$. Subsequently, Kronheimer and Mrowka~\cite{KronheimerMrowka1:1993} proved that the orientable four genus of $T(p,q)$ is this same value. Finding an orientable surface that realizes these invariants for $T(p,q)$  is not difficult. In particular, applying Seifert's algorithm to the standard torus knot diagram will produce a genus minimizing surface. In comparison, the situation with nonorientable surfaces  is less straightforward. The two invariants $\gamma_3$ and $\gamma_4$  coincide on some torus knots, but differ on others. And, nonorientable surfaces that realize the invariants' values are less easily procured. 

Batson~\cite{Batson:2014} studied $\gamma_3$ and $\gamma_4$ on torus knots and proved that:
$$\gamma_3(T(2k, 2k-1)) = k \hspace{.5cm} \textrm{ and } \hspace{.5cm} \gamma_4(T(2k, 2k-1)) = k-1.$$ 
It follows that $\gamma_4$ can be arbitrarily large and also that $\gamma_3$ and $\gamma_4$ need not be equal. 

In this paper, we prove the following.

\begin{theorem}\label{theorem}
Let $T(p,q)$ be any nontrivial torus knot with $p$ even. Write $p = qk + a $, where $0< a < q$ and $k\geq0$.  Then we have
$$(\gamma_3 - \gamma_4)(T(p,q)) \geq \frac{k}{2}.$$
\end{theorem}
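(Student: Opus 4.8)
\emph{Sketch of the intended proof.} The plan is to estimate the two invariants from opposite directions: an explicit upper bound for $\gamma_4(T(p,q))$ coming from a nonorientable surface in $B^4$ assembled out of band moves, and a lower bound for $\gamma_3(T(p,q))$ extracted from Teragaito's computation of the crosscap number of torus knots. Since $\gamma_3-\gamma_4$ takes nonnegative integer values, the asserted inequality is equivalent to $\gamma_3(T(p,q))-\gamma_4(T(p,q))\ge\lceil k/2\rceil$, and I would run the argument as an induction on $k$ whose inductive step passes from $T(p,q)$ to $T(p-2q,q)$ (available once $k\ge 2$), reducing everything to two per-step estimates and the base cases $k\in\{0,1\}$.

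For the upper bound I would generalize Batson's construction. Drawing $T(p,q)$ on the standardly embedded torus, a ``pinch'' is a nonorientable band move whose trace in $B^4$ raises the first Betti number of the spanning surface by exactly one; for $T(2k,2k-1)$ a chain of $k-1$ pinches reaches the unknot, which is Batson's route to $\gamma_4(T(2k,2k-1))=k-1$ \cite{Batson:2014}. For general $T(p,q)$ the aim is to locate the pinches so that each band consumes two full periods of the torus knot in the $p$-direction, i.e.\ carries $T(p,q)$ to $T(p-2q,q)$; this would give $\gamma_4(T(p,q))\le\gamma_4(T(p-2q,q))+1$ for $p>2q$, and, iterating, $\gamma_4(T(p,q))\le\gamma_4(T(a',q))+\lfloor k/2\rfloor$, where $a'=a$ when $k$ is even and $a'=q+a$ when $k$ is odd. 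Confirming this band count — in particular that a pair of periods can be absorbed by a single band, and pinning down exactly how a pinch acts on the pair $(p,q)$ — is the most delicate part of the argument, and I expect it to be the principal obstacle.

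For the lower bound I would invoke Teragaito's theorem that $\gamma_3(T(p,q))=N(pq)$, where $N$ is Bredon and Wood's function for the minimal first Betti number of a nonorientable surface in a lens space, and use the continued-fraction recursion defining $N$ to extract the monotonicity $\gamma_3(T(p,q))\ge\gamma_3(T(p-2q,q))+2$ for $p>2q$; heuristically, deleting two periods of the torus knot costs at least two handles in $S^3$ but only one in $B^4$. Subtracting the two per-step bounds yields $(\gamma_3-\gamma_4)(T(p,q))\ge(\gamma_3-\gamma_4)(T(a',q))+\lfloor k/2\rfloor$. The case $k=0$ is vacuous; for $k=1$ one must verify $\gamma_3(T(q+a,q))>\gamma_4(T(q+a,q))$, which I would get by comparing $N((q+a)q)$ with the band bound and which reduces to Batson's equalities when $a=1$. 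The $+1$ contributed by this odd base case, together with $\lfloor k/2\rfloor$, assembles to $\lceil k/2\rceil$, finishing the induction; the remaining work is to check the two per-step estimates uniformly in the residue $a$ and to keep the recursion inside the stated conventions.
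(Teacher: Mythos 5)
Your overall strategy --- an induction on $k$ passing from $T(p,q)$ to $T(p-2q,q)$, with opposing per-step estimates for $\gamma_3$ and $\gamma_4$ --- is genuinely different from the paper's, but one of the two per-step estimates is false and the induction does not close. The lower bound $\gamma_3(T(p,q))\ge \gamma_3(T(p-2q,q))+2$ is wrong. By Teragaito's theorem, $\gamma_3(T(p,q))$ is the number of \emph{steps} needed to reduce the continued fraction of $p/q$ to $0$, and a step removes $2$ only from the final coefficient $c_m$ (with the re-canonicalization identities also acting only at the tail). Writing $p/q=[k,c_1,\dots,c_m]$ and $(p-2q)/q=[k-2,c_1,\dots,c_m]$, the two expansions have identical tails, so the step sequences proceed identically until a single entry remains --- $[\ell]$ for $p/q$ and $[\ell-2]$ for $(p-2q)/q$ --- after which the counts differ by exactly $\ell/2-(\ell-2)/2=1$. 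Thus $\gamma_3(T(p,q))=\gamma_3(T(p-2q,q))+1$, not $+2$; concretely, $\gamma_3(T(8,3))=2$ while $\gamma_3(T(2,3))=1$. Your heuristic that removing two longitudinal wraps costs two crosscaps in $S^3$ is not what the continued-fraction computation shows.

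The companion upper bound is also shaky, and in any case cannot rescue the argument. Batson's pinch move carries $T(p,q)$ to $T(|p-2t|,|q-2h|)$, which on continued fractions is again a step on the \emph{last} coefficient; it does not produce $T(p-2q,q)$, which would require altering $c_0$. A single nonorientable band realizing $T(p,q)\to T(p-2q,q)$ is not something you construct, and it is not the move used anywhere in the paper. But even granting the optimistic bound $\gamma_4(T(p,q))\le\gamma_4(T(p-2q,q))+1$, the corrected $\gamma_3$ estimate gives a per-step net gain of $1-1=0$, so the induction yields nothing. What an induction of this shape would actually need is that an upper-bound surrogate for $\gamma_4$ stays \emph{constant} across the step, and this is precisely what the paper extracts from the surface $F_{p,q}$: since $\beta_1(F_{p,q})$ counts steps needed to reduce $p/q$ to a single entry, and that count is insensitive to $c_0$, one has $\beta_1(F_{p,q})=\beta_1(F_{p-2q,q})$. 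The paper then avoids induction entirely by comparing two explicit surfaces built from the \emph{same} initial pinch sequence down to the first unknot $T(\ell,1)$: $F_{p,q}\subset B^4$ caps off there, while $\Sigma_{p,q}\subset S^3$ (which realizes $\gamma_3$) requires $\ell/2$ further pinches to reach $T(0,1)$. Combined with $\ell\ge k$ from Theorem~\ref{p0} and $\gamma_4\le\beta_1(F_{p,q})$, this gives the stated bound directly.
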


From this theorem, we have the following immediate corollary. 
\begin{corollary}\label{corollary}
The difference between $\gamma_3(T(p,q))$ and $\gamma_4(T(p,q))$ grows arbitrarily large for any fixed odd $q$ where $p$ ranges over all integer values of a fixed congruence class modulo $q$. 
\end{corollary}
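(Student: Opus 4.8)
To prove the theorem --- the corollary then being immediate, since for a fixed odd $q$ and a fixed residue $a$ the integer $k$ runs to infinity along the infinitely many even values of $p = qk + a$ --- the plan is to bound $\gamma_3(T(p,q))$ from below and $\gamma_4(T(p,q))$ from above so that the two bounds differ by at least $k/2$. For the lower bound I would invoke the known value of the crosscap number on torus knots, computed by Teragaito on the basis of Bredon and Wood's determination of the minimal genus of nonorientable surfaces in lens spaces. What matters is that $C(p,q) := \gamma_3(T(p,q))$ is an explicit function of $p$ and $q$ that is large --- of order $p/2$ --- roughly because a nonorientable spanning surface of a torus knot cannot be untwisted: it is constrained to carry the large cabling boundary slope $pq$ rather than the slope $0$ of the fiber surface, and a spanning surface with so large a slope must have large first Betti number, the precise count being governed by a continued-fraction expansion attached to $(p,q)$. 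First I would record $C(p,q)$ explicitly and, with $a$ and $q$ held fixed, its dependence on $k$, which is linear with slope $q/2$.

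For the upper bound I would construct an explicit nonorientable surface $F$ properly embedded in $B^4$ with $\partial F = T(p,q)$ and $b_1(F) \le U(p,q)$ for an explicit $U$ satisfying $C(p,q) - U(p,q) \ge \lceil k/2\rceil$. The starting point is the braid identity $(\sigma_1\cdots\sigma_{q-1})^{p} = \Delta^{2k}\,(\sigma_1\cdots\sigma_{q-1})^{a}$, which presents $T(p,q)$ as $T(a,q)$ with $k$ full twists on $q$ parallel strands inserted. I would then write a movie, carried out in a collar $S^3 \times I$, of band (saddle) moves and crossing changes that strips off these $k$ twist regions to reach $T(a,q)$, and cap off $T(a,q)$ with a fixed nonorientable surface. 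The point of working in $B^4$ rather than $S^3$ is that one may use moves --- crossing changes and orientable handle slides --- that are unavailable to a surface embedded in $S^3$; exploiting these, a pair of consecutive twists can be removed at a cost of one unit of first Betti number, so that the $k$ twist regions contribute only about $k/2$ instead of the roughly $k$ they are forced to contribute inside $S^3$. Counting bands carefully and checking that $F$ is genuinely nonorientable --- so that the Euler-characteristic bookkeeping is that of crosscaps rather than handles --- yields the bound, which in the case $p = q+1$ should reproduce Batson's surface for $T(2k, 2k-1)$.

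Combining the two halves gives $(\gamma_3 - \gamma_4)(T(p,q)) \ge C(p,q) - U(p,q) \ge \lceil k/2 \rceil \ge k/2$. The hard part will be the $B^4$ construction together with the comparison $C - U \ge \lceil k/2 \rceil$: one must produce a concrete sequence of moves, verify that the surface it builds is nonorientable with first Betti number exactly $U(p,q)$ --- not merely $C(p,q) - O(1)$ --- and then confirm that $U$ falls below Teragaito's value $C$ by at least $\lceil k/2\rceil$ for every residue $a$ and every odd $q$. This last step amounts to matching the combinatorics of the twist removal against the continued-fraction data underlying $C(p,q)$ so that all the $a$- and $q$-dependent terms cancel in the difference. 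A secondary and more routine difficulty is keeping track of the births and deaths of trivial circles in the movie, so that it begins and ends on the correct knot types and the surface is connected with a single boundary component.
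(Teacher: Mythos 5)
Your reduction of the corollary to Theorem~\ref{theorem} is exactly the paper's: for a fixed odd $q$ and a coprime residue $a$ modulo $q$, the parameter $k$ in $p=qk+a$ runs to infinity along the (infinitely many) even values of $p$, so Theorem~\ref{theorem} immediately gives $\gamma_3-\gamma_4 \geq k/2 \to \infty$. The bulk of your proposal, however, concerns how you would prove Theorem~\ref{theorem}, and there your route diverges substantially from the paper's and contains a genuine gap.

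The paper does \emph{not} prove Theorem~\ref{theorem} by establishing an explicit lower bound $C(p,q)$ on $\gamma_3$, an explicit upper bound $U(p,q)$ on $\gamma_4$, and then matching their continued-fraction combinatorics term by term --- precisely the cancellation you flag as the ``hard part.'' Instead, the two competing surfaces $\Sigma_{p,q}\subset S^3$ (realizing $\gamma_3$) and $F_{p,q}\subset B^4$ are built from the \emph{same} sequence of nonorientable pinch moves; the only difference is where one stops. In $B^4$ one may cap the cobordism with a disk as soon as the torus knot becomes an unknot $T(\ell,1)$, whereas in $S^3$ the cobordism lives in a product neighborhood $T^2\times[0,1]$ and can only be capped off by a disk once it reaches $T(0,1)$, forcing exactly $\ell/2$ additional pinch moves. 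Thus $\gamma_3 - \beta_1(F_{p,q}) = \ell/2$ exactly, and $\ell$ is computed in Theorem~\ref{p0} as $k$ or $k+1$ depending on parity; no term-by-term cancellation is needed. This is the whole content of Theorem~\ref{comparing}.

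Your proposed $B^4$ construction has a concrete problem beyond incompleteness: you say the point of working in $B^4$ is that ``one may use moves --- crossing changes and orientable handle slides --- that are unavailable to a surface embedded in $S^3$.'' But a crossing change is not a move on an embedded cobordism in \emph{any} dimension; realizing a crossing change as an embedded cobordism in $S^3\times I \subset B^4$ still costs first Betti number (two saddles). The actual advantage of $B^4$ exploited in the paper is much simpler and orthogonal to this: any unknot, not merely $T(0,1)$, bounds a disk in $B^4$. Your heuristic that ``a pair of consecutive twists can be removed at a cost of one unit of first Betti number'' is close in spirit to what the pinch-move cobordism achieves when reducing $T(\ell,1)$ to $T(\ell-2,1)$, but as stated it is neither justified nor tied to any bookkeeping that would recover the sharp $\lceil k/2\rceil$ difference. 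Without the shared-construction idea, you would also need to compute Teragaito's $N(p,q)$ explicitly as a function of $k$ (it is \emph{not} simply linear of slope $q/2$ in $k$) and verify the cancellation, which is exactly the work the paper's argument elegantly avoids.
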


The first case of the corollary (that is, when $q = 3$) can be observed easily by piecing together known results. In this paper, we do a careful study of the surfaces and invariants involved which enables us to show that the result hold generally for all odd $q$. 

This paper is organized as follows. In Section~\ref{band-move}, we describe and study a particular nonorientable band move on torus knots. This band move gives rise to three different nonorientable surface constructions for torus knots. The first surface construction is given in Section~\ref{surface-construction}. In this case, the resulting surface lives in $B^4$, and hence is a candidate for realizing $\gamma_4$. In Section~\ref{3D-constructions}, we describe two surface constructions which realize the nonorientable three genus $\gamma_3$ for torus knots. The similarity of the three constructions given here facilitates comparisons of $\gamma_3$ and $\gamma_4$, which we do in Section~\ref{compare3and4}.


\section{A nonorientable band move on torus knots}~\label{band-move}

Torus knots have the convenient property that they admit a nonorientable band move that results in another torus knot. This operation is first explicitly described by Batson in~\cite{Batson:2014}. We review and study the operation in this section. 

Draw a torus knot on the flat torus as in Fig.~\ref{FigurePinchingMove}(a). Insert a band between any two adjacent strands (Fig.~\ref{FigurePinchingMove}(b)). Now the knot which results from doing the associated band move is again a curve embedded on the torus (Fig.~\ref{FigurePinchingMove}(c)); hence it is again a torus knot. We call this particular band move a {\em pinch move}, since it has the effect of pinching two adjacent strands together. Observe that this is a nonorientable band move and also that this is a well-defined move, meaning that the result of this process does not depend on the pair of adjacent strands chosen.

\begin{figure}   
\centerline{\includegraphics[width=11cm]{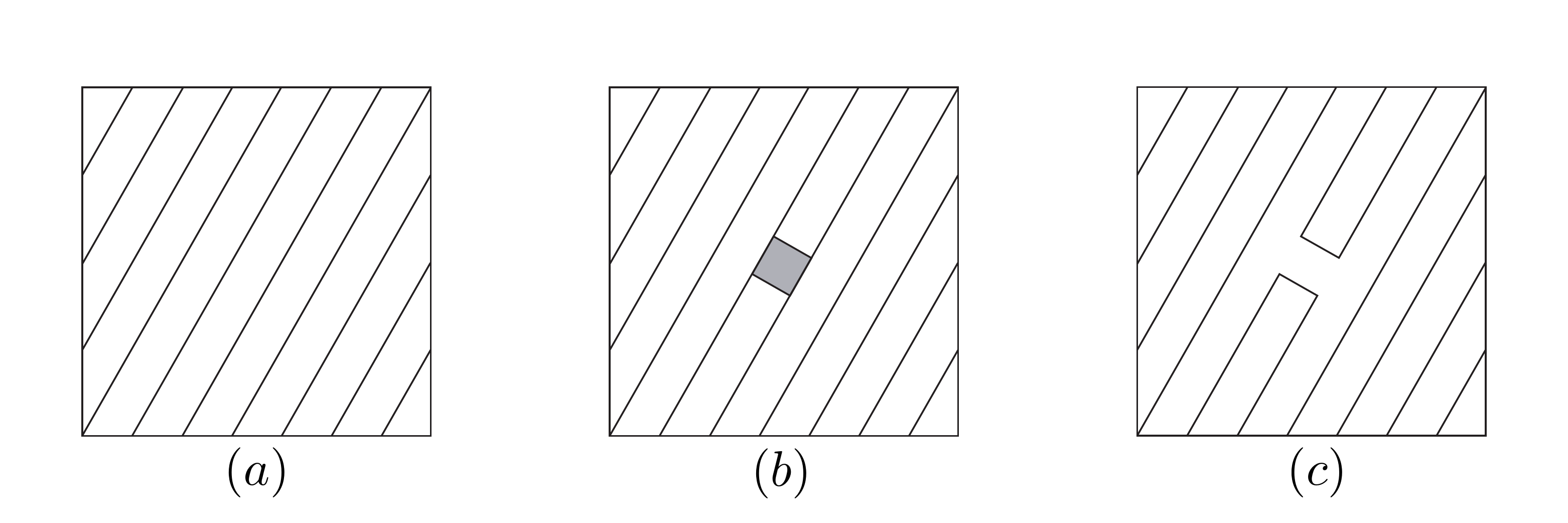}}
\vspace*{8pt}
\caption{(a) The torus knot $T(7,4)$. (b) A band between adjacent strands of the diagram. (c) The knot resulting from the band move is the torus knot $T(3,2)$. We call  a band move such as this a {\em pinch move}.  }
\label{FigurePinchingMove}
\end{figure}

Batson \cite{Batson:2014} stated that the torus knot obtained from doing a pinch move on $T(p,q)$ is given as follows (see~\cite{JabukaVC:2018} for a proof).

\begin{lemma}\cite{Batson:2014} \label{LemmaOnTheProofOfBatsonsPinchingMoveFormula}
Let $p,q>0$ be relatively prime. Begin with a diagram of the torus knot $T(p, q)$ on the flat torus. Apply a pinch move to $T(p,q)$. The resulting torus knot (up to orientation) is $T(|p-2t|, |q-2h|)$ where $t$ and $h$ are the integers uniquely determined by the requirements 
\begin{align*}
t & \equiv -q^{-1}\,(\text{mod }p)\qquad\text{ and } \qquad   t\in \{0,\dots, p-1\},\cr
h & \equiv \phantom{-} p^{-1}\,(\text{mod }q) \qquad \text{ and } \qquad h\in \{0,\dots, q-1\}.
\end{align*}
\end{lemma}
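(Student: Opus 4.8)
The plan is to model $T(p,q)$ as the $(p,q)$--curve on the standardly embedded torus in $S^3$ and then to compute, up to sign, the homology class of the curve produced by the pinch move. Since a connected embedded curve on the torus is determined up to isotopy by its homology class up to sign, this pins down the resulting torus knot.

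First I would fix coordinates, identifying the Heegaard torus with $\mathbb{R}^2/\mathbb{Z}^2$ and taking $T(p,q)$ to be the image of $s\mapsto(ps,qs)$ for $s\in\mathbb{R}/\mathbb{Z}$, so that $[T(p,q)]=(p,q)$ in $H_1=\mathbb{Z}^2$. In the fundamental square this curve is a union of $p$ parallel segments; the segment through the origin has, as one of its two neighbours, the segment with $y$--intercept $1/p$, which is the image of the parameter interval around $s=j/p$ for the unique $j\in\{1,\dots,p-1\}$ with $j\equiv q^{-1}\pmod p$. Because the text already establishes that the pinch move is well defined (independent of the adjacent pair chosen), and because any two short bands joining a fixed adjacent pair are isotopic, it suffices to analyze the pinch performed along the short vertical band that joins these two particular strands inside the fundamental square.

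Next I would carry out the band move directly. Its two feet cut $T(p,q)$ into two arcs $A$ and $B$ with $[A]+[B]=(p,q)$, and, because the move is nonorientable, the new curve $\gamma'$ is isotopic on the torus to $A$ followed by $B$ with the orientation of one of the two arcs reversed. Hence $[\gamma']=\pm\bigl([A]-[B]\bigr)=\pm\bigl((p,q)-2[A']\bigr)$, where $A'$ denotes $A$ closed up along the core of the band; the sign ambiguity here is exactly the ``up to orientation'' in the statement and so is harmless. It then remains to read off the winding numbers of the loop $A'$ from the picture in the fundamental square: one finds $[A']=(j,m)$, where $j\equiv q^{-1}\pmod p$ as above and $m$ is determined by $pm=qj-1$, so that $pm\equiv-1\pmod q$. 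Therefore $[\gamma']=\pm(p-2j,\,q-2m)$, and $\gamma'$ is the torus knot $T(|p-2j|,|q-2m|)$.

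Finally I would match this with the normalization in the statement: setting $t=p-j$ and $h=q-m$ gives $t\in\{0,\dots,p-1\}$ with $t\equiv-q^{-1}\pmod p$ and $h\in\{0,\dots,q-1\}$ with $h\equiv p^{-1}\pmod q$, while $|p-2j|=|p-2t|$ and $|q-2m|=|q-2h|$, which is the claimed answer. I expect the main obstacle to be the bookkeeping in the third step --- correctly identifying which arc of the cut-open torus knot is which, and computing the winding numbers of the closed-up arc so that the modular inverses come out with the right sign and in the prescribed ranges; the degenerate cases in which $|p-2t|$ or $|q-2h|$ equals $0$ or $1$ cause no trouble, since $\gamma'$ is in any event a connected embedded curve on the torus.
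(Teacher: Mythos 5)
The paper does not prove this lemma: it states it with a citation to Batson and refers explicitly to the reference \cite{JabukaVC:2018} for a proof, so there is no in-paper argument to compare you against. Assessing your write-up on its own terms: the approach is sound and the computation does close. The key structural facts you use are correct --- the new curve lies on the Heegaard torus and is therefore determined up to isotopy by its homology class up to sign, and for a nonorientable band move the new knot $K'$ satisfies $[K'] = \pm\bigl([A']-[B']\bigr)$ where $A'$, $B'$ are the two arcs of $K$ closed up along the two sides of the band. The bookkeeping you flag as the main worry does in fact work out: with $T(p,q)$ parametrized by $s\mapsto(ps,qs)$ and the band joining the strand through the origin to the strand with $y$-intercept $1/p$, the arc $A$ corresponds to $s\in[0,j/p]$ with $j\equiv q^{-1}\pmod p$, its lift has endpoint $(j,\,m+1/p)$ with $m=(qj-1)/p\in\mathbb{Z}$, and closing along the band (displacement $(0,-1/p)$) gives $[A']=(j,m)$ and $[B']=(p-j,\,q-m)$, hence $[K']=\pm(p-2j,\,q-2m)$. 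Setting $t=p-j$, $h=q-m$ matches the stated normalization, and a spot-check with $T(7,4)$ gives $j=2$, $m=1$, $[K']=\pm(3,2)$, agreeing with the paper's Figure~\ref{FigurePinchingMove}.

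Two small points to clean up if you formalize this. First, the identity $[\gamma']=\pm([A]-[B])$ is an abuse of notation since $A$ and $B$ are arcs, not cycles; what you actually want is $[K']=\pm([A']-[B'])$ for the closed-up loops, together with $[A']+[B']=(p,q)$, which is what the displacement computation delivers. Second, the assertion that $h=q-m$ lands in $\{0,\dots,q-1\}$ fails in the degenerate case $q=1$ (where $m=0$ and $h=q=1$), though $|q-2h|$ is unaffected; you should either exclude $q=1$ from the normalization discussion or note that the final formula $T(|p-2t|,|q-2h|)$ is insensitive to replacing $h$ by $q-h$.
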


The formula for the result of a pinch move given in Lemma~\ref{LemmaOnTheProofOfBatsonsPinchingMoveFormula} can be recast using continued fractions. We first recall the fundamentals of continued fractions. A full overview can be found in~\cite{Khinchin:1997}. For $p,q>0$ relatively prime integers, consider the continued fraction expansion of $\frac{p}{q}$:
$$\frac{p}{q} = [c_0,c_1,\dots, c_m]: = c_0+  \cfrac{1}{c_1+\cfrac{1}{c_2+\cfrac{1}{\ddots +\cfrac{1}{c_{m-1} +\cfrac{1}{c_m}}}}}.$$
We shall say that the continued fraction expansion is in {\em canonical form} if $c_0\ge 0$, $c_i\ge 1$ for $i=1,\dots, m-1$, and $c_m\ge 2$. Moreover, we say that the canonical continued fraction expansion for $1$ is $[1]$, and the canonical continued fraction expansion of $0$ is $[0]$. Under these conditions, the coefficients $c_0,\dots, c_m$ are uniquely determined by $p$ and $q$. 

Suppose we truncate this continued fraction expansion as follows: $[c_0,\dots, c_i]$, where $0\leq i \leq m$. In this way, one obtains the so-called {\em $i^{th}$ convergent of $[c_0, \dots, c_m]$}. This new continued fraction expansion corresponds to a rational number $\frac{p_i}{q_i}$, where $p_i$ and $q_i$ are uniquely determined, if we require them to be positive and relatively prime. In the special case that $c_0=0$, we note $\frac{p_0}{q_0}=[0] = 0.$ In this case, we take $p_0=0$ and $q_0 = 1$. These integers $p_i, q_i$ satisfy the following recursive relation for all $2\leq i\leq m$ (see \cite{Khinchin:1997} for more discussion):
\begin{equation} \label{EquationRecursiveDefinitionOfTheithConvergent}
p_i=c_ip_{i-1}+p_{i-2}\quad  \text{ and  } \quad q_i=c_iq_{i-1}+q_{i-2}.
\end{equation}

Now we define an operation on a continued fraction expansion, termed a {\em step}. This operation was first discussed in Section 10 of \cite{BredonWood:1969}. Although originally the operation was only defined for fractions $\frac{p}{q}$ with $p$ even, the operation can be defined more generally as follows.\\

\begin{definition}~\label{StepDefinition}
Let $p$ and $q$ be relatively prime positive integers with $q \neq 2$. Let $\frac{p}{q}$ have canonical continued fraction expansion $[c_0, \dots, c_m]$. The transition of this continued fraction to the fraction $[c_0,\dots, c_m-2]$ (which must be then put in canonical form) is a {\em step}.
\end{definition}

If the continued fraction expansion resulting from a {\em step} is not in canonical form (which occurs when $c_m=2$ or $c_m=3$), we modify it so as to be in canonical form by using one of these identities (more than once, if necessary):
\begin{align} \label{EquationMakingTheContinuedFractionExpansionCanonical}
[c_0,\dots, c_{m-2},c_{m-1},0] & = [ c_0,\dots, c_{m-2}], \cr  
[c_0,\dots, c_{m-2},c_{m-1},1] & = [ c_0,\dots, c_{m-1}+1].
\end{align}

Now we are ready to give an alternative to Lemma~\ref{LemmaOnTheProofOfBatsonsPinchingMoveFormula} for the torus knot that results from a pinch move.

\begin{proposition}~\label{pinch-step}
Let $p,q$ be relatively prime positive integers with $q\neq 2$, and let $T(r,s)$ be obtained from $T(p,q)$ via a pinch move. If the rational number $\frac{p}{q}$ has continued fraction expansion $\frac{p}{q}=[c_0,\dots, c_m]$, then a continued fraction expansion of $\frac{r}{s}$ is given by $\frac{r}{s}=[c_0,\dots, c_m-2]$, which uniquely determines the values of $r$ and $s$. Hence, we have the commutative diagram, in which the vertical maps associate to the torus knot $T(x,y)$ the rational number $\frac{x}{y}$.
$$
\begin{CD}
T(p,q) @>Pinch>> T(r,s) \\ 
@VVV @VVV \\
\frac{p}{q} = [c_0,\dots, c_m] @>Step>> \frac{r}{s} =  [c_0,\dots, c_m-2]
\end{CD}
$$

\end{proposition}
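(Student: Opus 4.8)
The plan is to prove this by comparing the two formulas for the pinch move: the explicit one in Lemma~\ref{LemmaOnTheProofOfBatsonsPinchingMoveFormula} in terms of the integers $t,h$, and the continued-fraction description in terms of a step. Concretely, I would show that if $\frac{p}{q}=[c_0,\dots,c_m]$ in canonical form and $\frac{r}{s}=[c_0,\dots,c_m-2]$ (put back in canonical form), then $\{r,s\}=\{|p-2t|,|q-2h|\}$ with $t,h$ as in Lemma~\ref{LemmaOnTheProofOfBatsonsPinchingMoveFormula}. So the real content is a number-theoretic identity about continued fractions, and the topology is entirely imported from the earlier lemma.

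The key computational tool is the matrix (or convergent) description of continued fractions. Recall that the last two convergents $\frac{p_{m-1}}{q_{m-1}}$ and $\frac{p_m}{q_m}=\frac{p}{q}$ satisfy $p_m q_{m-1}-p_{m-1}q_m=(-1)^{m-1}$ (the standard determinant identity, from~\eqref{EquationRecursiveDefinitionOfTheithConvergent}). Applying a step replaces $c_m$ by $c_m-2$, so by the recursion~\eqref{EquationRecursiveDefinitionOfTheithConvergent} the new numerator and denominator are
$$r' = (c_m-2)p_{m-1}+p_{m-2} = p_m - 2p_{m-1}, \qquad s' = (c_m-2)q_{m-1}+q_{m-2} = q_m - 2q_{m-1},$$
i.e. $\frac{r'}{s'}=\frac{p-2p_{m-1}}{q-2q_{m-1}}$ before reducing to canonical form. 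So I would prove $p_{m-1}\equiv t\pmod{?}$ and $q_{m-1}\equiv h\pmod{?}$ in the appropriate sense; more precisely, from $p q_{m-1}-p_{m-1}q=\pm1$ one reads off $q_{m-1}\equiv \pm p^{-1}\pmod q$ and $p_{m-1}\equiv \mp q^{-1}\pmod p$, which is exactly the congruence characterization of $h$ and $t$ up to sign and up to the range restriction. Handling the signs (the sign $(-1)^{m-1}$) and the reduction of $\frac{p-2p_{m-1}}{q-2q_{m-1}}$ to a fraction of \emph{positive} coprime integers is where one recovers the absolute values $|p-2t|,|q-2h|$: one observes $q_{m-1}\in\{0,\dots,q-1\}$ and $p_{m-1}\in\{0,\dots,p-1\}$ (a standard fact about the penultimate convergent), so either $q_{m-1}=h$ or $q_{m-1}=q-h$ according to the parity of $m$, and likewise for $p_{m-1}$ versus $t$; in either case $|q-2q_{m-1}|=|q-2h|$ and $|p-2p_{m-1}|=|p-2t|$. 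I would also need to separately check the degenerate cases excluded by canonical form — namely $c_m=2$ (step gives $[\,c_0,\dots,c_{m-1},0\,]=[c_0,\dots,c_{m-1}]$) and $c_m=3$ (step gives $[\,c_0,\dots,c_{m-1},1\,]=[c_0,\dots,c_{m-1}+1]$) via the identities~\eqref{EquationMakingTheContinuedFractionExpansionCanonical} — and verify the formulas still match there, as well as the trivial base cases $\frac{p}{q}=1=[1]$ and $\frac{p}{q}$ with $m=0$. Finally, uniqueness of $r,s$ is automatic: canonical continued fraction expansions are unique, so $[c_0,\dots,c_m-2]$ in canonical form determines $\frac{r}{s}$ hence the coprime positive pair $(r,s)$, and the commutative diagram is then just a restatement.

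The main obstacle will be the bookkeeping of signs and the reduction to canonical form: the naive fraction $\frac{p-2p_{m-1}}{q-2q_{m-1}}$ can have a negative numerator or denominator, or a common factor of $\pm1$ only after clearing signs, and one has to argue carefully that after putting it in canonical form the underlying torus knot $T(|p-2p_{m-1}|,|q-2q_{m-1}|)$ agrees with Batson's $T(|p-2t|,|q-2h|)$ — including the edge case where one of these quantities vanishes, giving the unknot $T(1,0)$ or $T(0,1)$, which on the continued-fraction side corresponds to the expansion collapsing all the way to $[0]$ or $[1]$. None of these steps is deep, but getting the parity of $m$ and the two sign conventions in Lemma~\ref{LemmaOnTheProofOfBatsonsPinchingMoveFormula} (note the asymmetry: $t\equiv -q^{-1}$ but $h\equiv +p^{-1}$) to line up consistently is the part that requires care.
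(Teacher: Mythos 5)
Your proposal follows essentially the same route as the paper's proof: use the determinant identity $p_mq_{m-1}-p_{m-1}q_m=(-1)^{m-1}$ to identify $t,h$ from Lemma~\ref{LemmaOnTheProofOfBatsonsPinchingMoveFormula} with $p_{m-1},q_{m-1}$ (or their complements $p-p_{m-1},q-q_{m-1}$, according to the parity of $m$), and then apply the recursion~\eqref{EquationRecursiveDefinitionOfTheithConvergent} to see that replacing $c_m$ by $c_m-2$ yields $\frac{p-2p_{m-1}}{q-2q_{m-1}}$. The paper organizes this by splitting on the parity of $m$ rather than carrying $\pm$ signs, and handles $q=1$ as a separate trivial case, but these are only cosmetic differences from what you describe.
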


\begin{proof}

We must separately handle the trivial case where $q=1$. But first, let us consider the case $q> 1$. Let $\frac{p}{q} = [c_0, \dots, c_m]$ be in canonical form, and observe that since $q>1$, it follows that $m\geq 1$. Let $p_i$ and $q_i$ be the numerator and denominator, respectively, of the $i^{th}$ convergent of $[c_0, \dots, c_m]$. 

A well known formula for convergents of continued fractions (see Theorem 2 in \cite{Khinchin:1997}) is:
$$p_{i}q_{i-1} - p_{i-1}q_{i} = (-1)^{i-1}$$
Applying this to $i=m$ and using the fact that $p = p_m$ and $q=q_m$, we obtain:
$$pq_{m-1} - p_{m-1}q = (-1)^{m-1}$$

Now we consider two cases, addressing the  parity of $m$. Suppose that $m$ is odd. Then we have $pq_{m-1} - p_{m-1}q = 1.$ Since $q=q_m=c_mq_{m-1}+q_{m-2}$, it follows that $1\le q_{m-1}\le q-1$. Similarly,  $1\le p_{m-1}\le p-1$. Altogether, this implies that $p_{m-1}$ and $q_{m-1}$ satisfy the requirements found in Lemma~\ref{LemmaOnTheProofOfBatsonsPinchingMoveFormula} for $t$ and $h$, respectively. Therefore, the values of $r$ and $s$ are given by $r=|p-2p_{m-1}|$ and $s=|q-2q_{m-1}|$. 

Working from the other direction, let us perform a step to the continued fraction of $\frac{p}{q} = [c_0,\dots, c_m] $. For the moment, let us denote the rational number corresponding to the resulting continued fraction $[c_0, \dots, c_m-2]$ by $\frac{a}{b}$. We will show that $\frac{a}{b} = \frac{r}{s}$, where $r$ and $s$ are the values we just computed above. 

Since the two continued fractions $[c_0, \dots, c_m]$ and $[c_0, \dots, c_m-2]$ agree in the first $m$ entries, it follows that the $i^{th}$ convergents coincide for $0\leq i\leq m-1$. The recursive relations \eqref{EquationRecursiveDefinitionOfTheithConvergent} then imply the following:
\begin{align*}
a = a_m & = (c_m-2)a_{m-1}+a_{m-2} =c_mp_{m-1}+p_{m-2} - 2p_{m-1} = p_m-2p_{m-1} = p-2p_{m-1},\cr
b = b_m & = (c_m-2)b_{m-1}+b_{m-2} =c_mq_{m-1}+q_{m-2} - 2q_{m-1} = q_m-2q_{m-1} = q-2q_{m-1}. 
\end{align*} 
Hence $\frac{a}{b} = [c_0, \dots, c_m-2] = \frac{r}{s}$, as desired. This  concludes the case where $m$ is odd.

Now if $m$ is even (and recall that $m \geq 1$), then  $pq_{m-1}-qp_{m-1} = -1$. Therefore, $p(q-q_{m-1}) - q(p-p_{m-1}) = 1$ with 
$1\le q-q_{m-1}\le q-1$ and $1\le p-p_{m-1}\le p-1$. This shows that $p-p_{m-1}$ and $q-q_{m-1}$ satisfy the requirements found in Lemma~\ref{LemmaOnTheProofOfBatsonsPinchingMoveFormula} for $t$ and $h$, respectively. So we conclude that
$r=|p-2p_{m-1}|$ and $s=|q-2q_{m-1}|$. Similar to the previous case, we can work from the other direction and find that the continued fraction $ [c_0, \dots, c_m-2]$  represents the rational number $\frac{p-2p_{m-1}}{q-2q_{m-1}}$, which completes the case.%

Finally, we consider the trivial case where $q = 1$. Performing a pinch move to the torus knot $T(p,1)$, one can check using Lemma~\ref{LemmaOnTheProofOfBatsonsPinchingMoveFormula} that the result is $T(p-2,1)$. On the other hand, the associated continued fraction is $\frac{p}{1} = [p]$. Performing a step on the continued fraction expansion, one obtains $[p-2] = \frac{p-2}{1}$, which also corresponds to the torus knot $T(p-2, 1)$, as desired. 
\end{proof}

No doubt this second characterization of pinch moves in terms of continued fractions may, at this moment, seem unnecessary. But in fact, this perspective will be valuable. We now consider the magnitude of the torus knot parameters after a pinch move. A variation of the following result is also found in Lemma 2.3 of ~\cite{JabukaVC:2018}.

\begin{proposition}\label{MagnitudeOfRandS}
Let $p,q>1$ be relatively prime integers such that if $pq$ is odd, we take $p>q$, and if $pq$ is even, we take $p$ to be even.
Let the torus knot $T(r,s)$ be obtained from the torus knot $T(p,q)$ by a pinch move. If $p>q$, then $r\geq s$. On the other hand, if $p<q$, then $r < s$. 
\end{proposition}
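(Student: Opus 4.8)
The plan is to leverage the explicit description of the pinch move obtained inside the proof of Proposition~\ref{pinch-step}. First observe that under the standing convention $q$ is necessarily odd: were $q$ even, then $p$ would be odd by coprimality, so $pq$ would be even, which would force $p$ even --- a contradiction. Hence $q\ge 3$ and in particular $q\neq 2$, so Proposition~\ref{pinch-step} applies; writing $\frac pq=[c_0,\dots,c_m]$ in canonical form we have $m\ge 1$. Let $\frac{p_i}{q_i}$ denote the $i$-th convergent of $[c_0,\dots,c_m]$, with the usual conventions $p_{-1}=1$, $q_{-1}=0$, so that the recursion~\eqref{EquationRecursiveDefinitionOfTheithConvergent} holds for all $1\le i\le m$. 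From the proof of Proposition~\ref{pinch-step} we have $r=|p-2p_{m-1}|$ and $s=|q-2q_{m-1}|$. The first point to record is that the absolute values are superfluous: since $c_m\ge 2$, the recursion gives $p=c_mp_{m-1}+p_{m-2}\ge 2p_{m-1}$, so $r=p-2p_{m-1}=(c_m-2)p_{m-1}+p_{m-2}\ge 0$, and likewise $s=q-2q_{m-1}=(c_m-2)q_{m-1}+q_{m-2}\ge 0$.

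Next I would compare the convergents of $\frac pq$ entrywise. Since $\gcd(p,q)=1$ and $q>1$ we have $q\nmid p$, hence $c_0=\lfloor p/q\rfloor$; in particular $p>q$ is equivalent to $c_0\ge 1$, and $p<q$ to $c_0=0$. If $c_0\ge 1$, an immediate induction on the recursion --- with base cases $p_{-1}=1\ge 0=q_{-1}$ and $p_0=c_0\ge 1=q_0$ --- shows $p_i\ge q_i$ for every $i$; substituting into the formulas above and using $c_m-2\ge 0$ gives $r=(c_m-2)p_{m-1}+p_{m-2}\ge (c_m-2)q_{m-1}+q_{m-2}=s$, the desired conclusion when $p>q$. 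If instead $c_0=0$, then $p>1$ forces $m\ge 2$, and the analogous induction, starting from $p_0=0\le 1=q_0$, gives $p_i\le q_i$ for all $i\ge 0$, whence $r\le s$.

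It remains to upgrade $r\le s$ to the strict inequality $r<s$ in the case $p<q$, and this is where the parity hypothesis enters. Since $p<q$ we cannot be in the case ``$pq$ odd and $p>q$'', so $pq$ is even and hence $p$ is even (and $q$ odd). Then $r=p-2p_{m-1}$ is even while $s=q-2q_{m-1}$ is odd, so $r\neq s$, and together with $r\le s$ this yields $r<s$.

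I expect the only delicate point to be precisely this last one. Once one has the formulas $r=|p-2p_{m-1}|$ and $s=|q-2q_{m-1}|$ coming out of Proposition~\ref{pinch-step}, the comparisons $r\ge s$ (when $p>q$) and $r\le s$ (when $p<q$) are forced by a one-line induction on the continued-fraction recursion; the subtlety is that this induction does not distinguish $r=s$ from $r<s$, and closing that gap is exactly what requires invoking the evenness of $p$.
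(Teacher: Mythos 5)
Your proof is correct in substance but takes a genuinely different route from the paper's. The paper stays at the level of canonical continued fraction expansions: it appeals to Proposition~\ref{pinch-step} to write $\frac{r}{s}=[c_0,\dots,c_m-2]$ and then case-analyzes on $c_m\in\{2,3,\ge 4\}$ (and on $m$ when $c_m=2$) to check that the first entry of the resulting \emph{canonical} expansion preserves its zero/nonzero status, which is equivalent to the sign of $r-s$. You instead extract the explicit formulas $r=|p-2p_{m-1}|$, $s=|q-2q_{m-1}|$ from the proof of Proposition~\ref{pinch-step}, observe (nicely) that the absolute values are unnecessary because $c_m\ge 2$, and then compare $r$ and $s$ directly via a term-wise inductive comparison of the convergents' numerators and denominators. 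This avoids the $c_m$ case analysis entirely and also makes visible the useful fact that $p-2p_{m-1}\ge 0$ and $q-2q_{m-1}\ge 0$ always. Your parity argument to upgrade $r\le s$ to $r<s$ is exactly the right observation, and it is more transparent than the paper's implicit use of the canonical-form dichotomy.

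One small detail to tighten: in the $c_0=0$ direction you cannot ``analogously'' start the induction at $i=-1$, since $p_{-1}=1>0=q_{-1}$, which is the wrong inequality. You therefore need two genuine base cases at $i=0$ and $i=1$; the first, $p_0=0\le 1=q_0$, you give, but you should also record $p_1=c_1\cdot 0+1=1\le c_1=q_1$ (using $c_1\ge 1$) before invoking the recursion for $i\ge 2$. With that line added, the argument is complete.
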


\begin{proof}
Suppose that $p>q$. It follows that the  canonical continued fraction expansion for $\frac{p}{q}$ is of the form $[c_0, c_1, \ldots, c_m]$ where $c_0$ is nonzero. It suffices to show that the canonical continued fraction expansion for $\frac{r}{s}$ also has a nonzero first entry. From Proposition~\ref{pinch-step}, we know that a continued fraction expansion for $\frac{r}{s}$ is $[c_0, c_1, \ldots, c_m-2]$, which perhaps is not yet in canonical form. 

Working through the possibilities, one can verify that when the expansion $\frac{r}{s} = [c_0, c_1, \ldots, c_m-2]$ is put into canonical form, the first entry is still nonzero. We describe these possibilities here. 

First, if $c_m\geq 4$, then the canonical form of the continued fraction is again \\$\frac{r}{s} = [c_0, c_1, \ldots, c_m-2]$, and it follows immediately that the first entry is nonzero. 

Next suppose that $c_m = 3$. Then the canonical form of the continued fraction is $\frac{r}{s} = [c_0, c_1, \ldots, c_{m-1}+1]$ if $m>0$, and it is $[1]$ if $m=0$. So again, the first entry is nonzero. 

We lastly consider the case $c_m=2$, and we consider the different possible values for $m$ in this case. First, if $m = 0$, then we have $\frac{p}{q} = [2] = \frac{2}{1}$ which means that $q=1$, which is precluded in our assumptions. It is also impossible that $m=1$, since that would imply $q=2$, but $q$ is odd. So then, it remains to consider the possibility $m\geq 2$.  We have $\frac{r}{s} = [c_0, c_1, \ldots, c_{m-1},0] = [c_0, c_1, \ldots, c_{m-2}]$. So the canonical form of the continued fraction expansion of $\frac{r}{s}$ is given by:

$$\frac{r}{s} ~ = ~\begin{cases}
[c_0, c_1, \ldots, c_{m-2}] & \text{ if $c_{m-2} > 1$} \\
[c_0, c_1, \ldots, c_{m-3}+1] & \text{  if $c_{m-2} = 1$ and $m\geq 3$}\\

[1] & \text{  if $c_{m-2} = 1$ and $m = 2$}\\
\end{cases}
$$
In any of these cases, since $c_0$ is nonzero, it follows that the first entry of the continued fraction expansion of $\frac{r}{s}$ in canonical form is nonzero, which implies that $r\geq  s$, as desired.

Now, suppose that $p<q$. From our convention, it follows that $p$ is even and $q$ is odd. The canonical continued fraction expansion for $\frac{p}{q}$ is of the form $[0, c_1, \ldots, c_m]$.  It suffices to show that the continued fraction expansion for $\frac{r}{s}$ also has a first coefficient of zero. Similar to before, one can work through the possibilities to show that once the expansion of $\frac{r}{s}=[0, c_1, \ldots, c_m-2]$ is put into canonical form, the first entry is still zero, which implies that $r<s$, as desired. We leave these details to the reader.

\end{proof}

Using the notation in Lemma~\ref{LemmaOnTheProofOfBatsonsPinchingMoveFormula},  the torus knot resulting from a single pinch move on $T(p,q)$ is the torus knot $T(|p-2t|, |q-2h|)$. It turns out that the signs of $p-2t$ and $q-2h$ coincide.

\begin{lemma} \label{LemmaAboutTheSameSignaOfRAndS}
Let $p,q>1$ be relatively prime integers. If $pq$ is odd, let $p>q$. If $pq$ is even, then let $p$ be even. Let $t, h$ be obtained from $p,q$ as in Lemma~\ref{LemmaOnTheProofOfBatsonsPinchingMoveFormula}. Then 
$$(p-2t)(q-2h)\ge0,$$ 
with equality occurring if and only if $T(p,q) = T(2, \ell)$ for some odd integer $\ell$.
\end{lemma}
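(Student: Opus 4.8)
The plan is to reduce the statement to elementary arithmetic of the canonical continued fraction expansion $\frac{p}{q}=[c_0,\dots,c_m]$, using computations already performed inside the proof of Proposition~\ref{pinch-step}. Since $q>1$ we have $m\ge 1$. Writing $p_i/q_i$ for the $i$-th convergent, the proof of Proposition~\ref{pinch-step} shows that $t=p_{m-1}$ and $h=q_{m-1}$ when $m$ is odd, and $t=p-p_{m-1}$ and $h=q-q_{m-1}$ when $m$ is even. In both cases, a direct substitution yields
$$(p-2t)(q-2h)=(p-2p_{m-1})(q-2q_{m-1}),$$
so the whole problem is to determine the sign of the right-hand side.

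Second, I would rewrite the two factors via the recursion~\eqref{EquationRecursiveDefinitionOfTheithConvergent}: from $p=p_m=c_mp_{m-1}+p_{m-2}$ and $q=q_m=c_mq_{m-1}+q_{m-2}$ one gets
$$p-2p_{m-1}=(c_m-2)p_{m-1}+p_{m-2},\qquad q-2q_{m-1}=(c_m-2)q_{m-1}+q_{m-2}.$$
Because the expansion is canonical, $c_m\ge 2$, and all convergent numerators and denominators are nonnegative; hence both factors are $\ge 0$ and their product is $\ge 0$. This proves the inequality.

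Third, for the equality case I would first argue that the second factor never vanishes: $(c_m-2)q_{m-1}+q_{m-2}=0$ would force $c_m=2$ and $q_{m-2}=0$, hence $m=1$ (since $q_i\ge 1$ for $i\ge 0$ while $q_{-1}=0$), hence $\frac{p}{q}=[c_0,2]=\frac{2c_0+1}{2}$ and $q=2$; but $\gcd(p,q)=1$ then makes $p$ odd, contradicting the standing convention. Thus $(p-2t)(q-2h)=0$ if and only if $p-2p_{m-1}=0$. Running the same analysis on the first factor — now using $p>1$ to exclude $p_{m-1}=0$ and again the convention to exclude $q=2$ — forces $c_m=2$, $c_0=0$ and $m=2$, i.e. $\frac{p}{q}=[0,c_1,2]=\frac{2}{2c_1+1}$, which is exactly the statement that $p=2$ and $q$ is odd, i.e. $T(p,q)=T(2,\ell)$. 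The converse is a direct check: if $p=2$ and $\ell=q\ge 3$ is odd, write $\ell=2c_1+1$ with $c_1\ge 1$, so $\frac{2}{\ell}=[0,c_1,2]$ and $p-2p_{m-1}=p-2p_1=2-2=0$, whence the product vanishes.

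I do not expect a genuine obstacle: once the reduction to $(p-2p_{m-1})(q-2q_{m-1})$ is in hand, everything is nonnegativity of convergent entries plus a short case check. The one point requiring care is the role of the hypothesis: the standing convention (that $p$ is even whenever $pq$ is even, which rules out $q=2$) is precisely what discards the degenerate expansions with $c_m=2$ and $m=1$, expansions that would otherwise violate both the strict inequality and the equality characterization.
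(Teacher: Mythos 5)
The paper does not prove this lemma itself; it cites \cite{JabukaVC:2018} as the source, so there is no in-paper proof to compare against. Your proof is a correct, self-contained argument using the continued-fraction machinery that this paper establishes in Proposition~\ref{pinch-step}, which is a natural and clean route: the reduction of $(p-2t)(q-2h)$ to $(p-2p_{m-1})(q-2q_{m-1})$ (which works in both parities of $m$ because the extra sign squares away), the identity $p-2p_{m-1}=(c_m-2)p_{m-1}+p_{m-2}$ via the recursion, the nonnegativity from $c_m\ge 2$, and the careful case analysis for when either factor vanishes are all sound. I note two small points worth tightening if you write this up. First, you invoke the convergent recursion at index $i=1$ (via $p_{-1}=1$, $q_{-1}=0$), whereas the paper only records \eqref{EquationRecursiveDefinitionOfTheithConvergent} for $2\le i\le m$; this extension is standard but should be stated, since the $m=1$ case is where $p_{m-2}$ and $q_{m-2}$ fall off the explicit range. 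Second, your closing remark that the $q=2$ expansions ``would otherwise violate both the strict inequality and the equality characterization'' overstates the matter: for $\tfrac{p}{q}=[c_0,2]$ one gets $(p-2p_0)(q-2q_0)=1\cdot 0=0$ and $T(2c_0+1,2)=T(2,2c_0+1)$ \emph{is} of the form $T(2,\ell)$ with $\ell$ odd, so the lemma's conclusion would not actually fail. What the standing convention really buys you is the specific convenience that the second factor is strictly positive, which lets you reduce the equality analysis to the first factor alone; without it you would simply need a symmetric case, not a repair of a false statement.
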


The result above is proved in~\cite{JabukaVC:2018}. This leads us to the following definition.
\begin{definition}
Let $p,q$ be as in Lemma~\ref{LemmaOnTheProofOfBatsonsPinchingMoveFormula}. 
A pinch move on $T(p,q)$ is {\em positive} (respectively, {\em negative}) if  $p-2t$ and $q-2h$ are both positive or zero (respectively, both negative or zero). 

\end{definition}

We now give a characterization of when pinch moves are positive or negative in terms of the continued fraction expansion of $\frac{p}{q}$.

\begin{theorem}\label{PositiveNegativePinchMove}
Let $T(p,q)$ be a nontrivial torus knot with $q \neq 2$. A pinch move applied to $T(p,q)$ will be a positive pinch move if and only if the canonical continued fraction expansion is of the form $\frac{p}{q} = [c_0, \dots, c_m]$, where $m$ is odd. 
\end{theorem}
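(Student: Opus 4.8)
The plan is to connect the sign of the pinch move directly to the parity of $m$ via the convergent data already computed in the proof of Proposition~\ref{pinch-step}. Recall from that proof that the integers $t$ and $h$ from Lemma~\ref{LemmaOnTheProofOfBatsonsPinchingMoveFormula} are identified explicitly: when $m$ is odd we have $t = p_{m-1}$ and $h = q_{m-1}$, while when $m$ is even we have $t = p - p_{m-1}$ and $h = q - q_{m-1}$. Consequently $p - 2t$ equals $p - 2p_{m-1}$ when $m$ is odd and $2p_{m-1} - p$ when $m$ is even, and similarly for $q - 2h$. So it suffices to determine the sign of $p - 2p_{m-1}$ (equivalently of $q - 2q_{m-1}$, which by Lemma~\ref{LemmaAboutTheSameSignaOfRAndS} agrees), and then observe that passing from $m$ odd to $m$ even flips this sign. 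In other words, the whole theorem reduces to the single claim: \emph{for a nontrivial torus knot with $q \neq 2$, one always has $p - 2p_{m-1} > 0$ and $q - 2q_{m-1} > 0$}; granting this, the $m$-odd case is exactly the positive case and the $m$-even case is exactly the negative case.

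First I would dispose of the trivial-denominator case $q = 1$ separately, where $\frac{p}{q} = [p]$, so $m = 0$ is even, and indeed the pinch move sends $T(p,1)$ to $T(p-2,1)$, which is positive precisely when... — here one must be slightly careful, since $T(p,1)$ is unknotted and the theorem hypothesis excludes trivial torus knots, so in fact $q=1$ need not be treated, but I would remark on it to be safe. For $q > 1$ we have $m \geq 1$. Then I would prove $q_{m-1} < q/2$ as follows: from the recursion $q = q_m = c_m q_{m-1} + q_{m-2}$ with $c_m \geq 2$ and $q_{m-2} \geq 0$, we get $q \geq 2 q_{m-1}$, with equality only if $c_m = 2$ and $q_{m-2} = 0$. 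The case $q_{m-2} = 0$ forces $m - 2 \leq 0$, i.e. $m = 1$ (since $q_0 = 1$ when $c_0 = 0$ and $q_{-1}$-type degeneracies only arise at the bottom), and $m = 1$ with $c_1 = 2$ gives $q = 2$, excluded by hypothesis. Hence $q > 2q_{m-1}$ strictly, so $q - 2q_{m-1} > 0$. The same argument with $p$'s shows $p > 2p_{m-1}$, using $p_{m-2} \geq 1$ for $m \geq 2$ and handling $m = 1$ (where $\frac{p}{q} = [c_0, c_1]$, $p = c_0 c_1 + 1$, $p_0 = c_0$, so $p - 2p_0 = c_0(c_1 - 2) + 1 > 0$ as long as we are not in a degenerate situation) directly.

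The main obstacle I anticipate is the bookkeeping around the low-index base cases of the continued-fraction recursion — specifically making sure the inequalities $p_{m-2} \geq 1$ and $q_{m-2} \geq 1$ (or the exceptional values when $c_0 = 0$, where $p_0 = 0$) are invoked correctly, and confirming that the only way equality $q = 2q_{m-1}$ or $p = 2p_{m-1}$ can occur is the excluded configuration $T(2,\ell)$ or $q = 2$. This is exactly the content already isolated in Lemma~\ref{LemmaAboutTheSameSignaOfRAndS}, so a cleaner route is simply to cite that lemma: it tells us $(p - 2t)(q-2h) \geq 0$ with equality only for $T(2,\ell)$, hence for our knots (with $q \neq 2$, so in particular $p \neq 2$ unless $q = 1$) the product is strictly positive, meaning $p - 2t$ and $q - 2h$ have the \emph{same, nonzero} sign. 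Then the only remaining point is to pin down \emph{which} sign, and for that the explicit formulas $t = p_{m-1}$ ($m$ odd) versus $t = p - p_{m-1}$ ($m$ even) together with $0 < p_{m-1} < p$ immediately give $p - 2t > 0$ when $m$ is odd and $p - 2t < 0$ when $m$ is even. That completes both directions of the equivalence.
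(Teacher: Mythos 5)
Your middle paragraph is essentially the paper's argument: use the explicit identification $t=p_{m-1},\,h=q_{m-1}$ ($m$ odd) versus $t=p-p_{m-1},\,h=q-q_{m-1}$ ($m$ even) from the proof of Proposition~\ref{pinch-step}, then control the size of $p_{m-1}$ and $q_{m-1}$ through the recursion $p_m = c_m p_{m-1} + p_{m-2}$, $q_m = c_m q_{m-1} + q_{m-2}$ together with $c_m\ge 2$. However, there are two problems.

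First, your reduction overshoots. You claim that one always has \emph{both} $p-2p_{m-1}>0$ and $q-2q_{m-1}>0$ strictly. The inequality $p_{m-2}\ge 1$ you invoke for $m\ge 2$ fails when $c_0=0$ and $m=2$ (there $p_{m-2}=p_0=c_0=0$), which is exactly what happens for $T(2,q)$ with $q$ odd, where $\frac{2}{q}=[0,\tfrac{q-1}{2},2]$. In that case $p-2p_{m-1}=2-2\cdot 1=0$, so your claimed strict inequality is false. The theorem's conclusion still holds there only because $m=2$ is even and $q-2h<0$ takes over, but your stated reduction is not correct as written. The paper avoids this by proving a single strict inequality per parity case — $p-2t>0$ when $m$ is odd, and $q-2h<0$ when $m$ is even — which is all the definition of positive/negative pinch moves requires; the $p$-inequality is never applied in a case where it can degenerate.

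Second, the ``cleaner route'' at the end does not work. Lemma~\ref{LemmaAboutTheSameSignaOfRAndS} tells you that $p-2t$ and $q-2h$ share a sign, but it is silent about \emph{which} sign. Your claim that $0<p_{m-1}<p$ together with $t=p_{m-1}$ ``immediately give $p-2t>0$'' is wrong: $0<p_{m-1}<p$ is consistent with $p_{m-1}>p/2$, in which case $p-2p_{m-1}<0$. What you actually need is the sharper bound $p_{m-1}<p/2$ (equivalently $q_{m-1}<q/2$), and that is exactly the content of the recursion argument in your middle paragraph. So the ``cleaner route'' silently depends on the very estimate it was supposed to replace.

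In short: keep the recursion argument, but mirror the paper and prove only the one strict inequality needed in each parity case ($p-2t>0$ for $m$ odd via $p-2p_{m-1}\ge p_{m-2}\ge 1$; $q-2h<0$ for $m$ even via $2q_{m-1}-q\le -q_{m-2}\le -1$), and drop the Lemma~\ref{LemmaAboutTheSameSignaOfRAndS} shortcut.
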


\begin{proof}

Suppose $m$ is odd. Then by the proof of Proposition~\ref{pinch-step}, the associated values of $t$ and $h$ involved in the pinch move are related to the convergents of $\frac{p}{q}$ as follows: $t =p_{m-1}$ and $h = q_{m-1}$. It suffices to show that $p-2t = p - 2p_{m-1}$ is {\em positive}.

In the special case that $m = 1$, the continued fraction is of the form $\frac{p}{q} = [c_0, c_1] = \frac{c_0c_1 + 1}{c_1}$, where $c_1\geq 2$. So we have:
$$p - 2t = p - 2c_0 = (c_0c_1 + 1) - 2c_0 = c_0(c_1 - 2) + 1 > 0.$$ 
Hence the pinch move is positive.
 
If $m > 1$ (and recall that $m$ is odd), then using the fact that $c_m\geq 2$ and using Equation~\ref{EquationRecursiveDefinitionOfTheithConvergent}, we have:
$$p - 2t = p - 2p_{m-1} \geq p - c_mp_{m-1} = p_m - c_mp_{m-1} = p_{m-2} > 0.$$
Therefore the pinch move is positive. Thus we have shown that if $m$ is odd, then the pinch move applied to $T(p,q)$ is positive.

Now suppose $m$ is even. To show that the pinch move on $T(p,q)$ is negative, it suffices to show that $q-2h$ is negative. Notice that if $m = 0$, then the associated torus knot is an unknot, which we have precluded. Hence $m > 0$ (and recall that $m$ is even). From the proof of Proposition~\ref{pinch-step}, we know that in this case, $t =p -p_{m-1}$ and $h = q - q_{m-1}$.  We must show that $q-2h = q - 2(q-q_{m-1})$ is {\em negative}. Again using the fact that $c_m\geq 2$ and Equation~\ref{EquationRecursiveDefinitionOfTheithConvergent}, we have 
$$q - 2h = q - 2(q -q_{m-1}) = 2q_{m-1} - q \leq c_mq_{m-1} - q = c_mq_{m-1} - q_m  = -q_{m-2} < 0$$
Therefore the pinch move is negative.  

\end{proof}

\section{A surface construction in $B^4$}~\label{surface-construction}

In this section, we make use of the band move defined in the previous section to construct a nonorientable surface with boundary $T(p,q)$.

In general, beginning with any knot $K$, if a band move on $K$ results in the knot $K'$, there is a corresponding smooth cobordism from $K$ to $K'$. In our particular setting, the pinch move on $T(p,q)$ gives a nonorientable cobordism between two torus knots $T(p,q)$ and the resulting knot $T(r,s)$. The cobordism can be realized in $T^2\times [0, 1]$, with $T(p,q)$ lying on the torus $T^2\times\{0\}$ and with  $T(r,s)$ lying on the torus $T^2\times\{1\}$. Moreover, the parity and relative magnitude of the integers in the pair $(p,q)$ is the same as that of $(r,s)$ (Proposition~\ref{MagnitudeOfRandS}), so the ordering of the pair $(r,s)$ coincides with our convention (namely, if $rs$ is even, then $r$ is even, and if $rs$ is odd, then $r>s$). 

Repeatedly applying pinch moves, we obtain a sequence of torus knots:
$$T(p,q)\xrightarrow{\text{ Pinch}} T(r_1, s_1) \xrightarrow{\text{ Pinch}} T(r_2, s_2) \xrightarrow{\text{ Pinch}} \cdots \xrightarrow{\text{ Pinch}} T(r_n, s_n)$$
And this sequence of torus knots, in turn, represents a smooth cobordism from $T(p,q)$ to $T(r_n, s_n)$. Furthermore, we can take the cobordism to live in $T^2 \times  [0, 1] \subset S^3$. 

Geometrically, one can observe that if one starts with a nontrivial torus knot $T(p,q)$, then the parameters of the torus knot $T(r,s)$ resulting from a single pinch move are smaller than that of the original torus knot. Hence, we can be sure that eventually the sequence of pinch moves applied to $T(p,q)$ will produce the unknot. In this way, we have produced a nonorientable cobordism from $T(p,q)$ to an unknot $T(\ell, 1)$ for some $\ell\geq 0$. (We will pin down the value of $\ell$ in Theorem~\ref{p0}.) The cobordism from $T(p,q)$ to $T(\ell,1)$ can be capped off with a disk which is embedded in $B^4$. The resulting nonorientable surface is denoted $F_{p,q}$ and has boundary $T(p,q)$, as desired. 

The first Betti number of this surface $F_{p,q}$ equals the number of pinch moves applied to first reduce $T(p,q)$ to an unknot $T(\ell,1)$. Equivalently, the first Betti number of $F_{p,q}$ equals the number of steps needed to reduce the continued fraction expansion for $\frac{p}{q}$ to an integer $\ell$. Let us consider an example.

\begin{example}
The torus knot $T(4,3)$ reduces to the unknot $T(2,1)$ with just one pinch move. Therefore there is a nonorientable cobordism between $T(4,3)$ and $T(2,1)$. Gluing a disk to the cobordism along the unknot $T(2,1)$, we obtain a nonorientable surface $F_{4,3}$. Notice that $\beta_1(F_{4,3}) = 1$, which implies that $F_{4,3}$ is a M\"obius band embedded in $B^4$. Since $\gamma_4$ is bounded below by 1, it follows that $\gamma_4(T(4,3)) =1$. See Fig.~\ref{SurfaceFigure-F-3-4} for an illustration of the surface $F_{4,3}$ immersed in $S^3$. 
\end{example}

Next, we give a formula for the value of $\ell$ in the construction of $F_{p,q}$.

\begin{theorem}\label{p0}
Let $p,q >1$ be relatively prime positive integers such that if $pq$ is odd, then $p>q$, and if $pq$ is even, then $p$ is even. Write $p = qk + a $, where $0< a < q$ and $k\geq0$. 

Apply pinch moves to the torus knot $T(p,q)$ until it first becomes unknotted. The resulting unknot arising from the sequence of pinch moves is $T(\ell, 1)$, where $\ell$ is given by the formula below: 

\begin{equation}\label{ell}
\ell = \begin{cases}

k & \text{ if $p\equiv k\!\!\!\!\pmod{2}$} \\
k + 1 & \text{ if $p\not\equiv k\!\!\!\! \pmod{2}$}
\end{cases}
\end{equation}

\end{theorem}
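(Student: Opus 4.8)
The plan is to track what happens to the continued fraction expansion of $\frac{p}{q}$ under repeated steps, using Proposition~\ref{pinch-step} to translate pinch moves into steps. Since $\frac{p}{q} = [c_0, c_1, \dots, c_m]$ with $c_0 = k$ (because $p = qk + a$ with $0 < a < q$ forces the first partial quotient to be $\lfloor p/q \rfloor = k$), a step repeatedly subtracts $2$ from the last entry and re-canonicalizes via the identities in \eqref{EquationMakingTheContinuedFractionExpansionCanonical}. The sequence of pinch moves terminates when the continued fraction becomes a single integer $[\ell]$, i.e.\ when the torus knot first becomes $T(\ell,1)$. So the task reduces to: starting from $[c_0,\dots,c_m]$ with $c_0 = k$, apply steps until only one entry remains, and compute that entry.

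First I would handle the base case $m = 0$, where $\frac{p}{q} = [k]$ already (so $q = 1$, technically outside the hypothesis $q>1$, but worth noting the formula is consistent since then $p = k$ and $p \equiv k$). Next I would set up an induction on $m$, or more precisely on the ``length'' of the reduction process. The key observation is that performing steps on $[c_0,\dots,c_m]$ eventually, after finitely many steps, collapses the last entry and produces a continued fraction $[c_0,\dots,c_{m-1}']$ of shorter length (by the identities $[\dots,c_{m-1},1] = [\dots,c_{m-1}+1]$ and $[\dots,c_{m-1},0]=[\dots,c_{m-2}]$). I would argue that the parity of $p$ relative to the parity of the number of steps taken so far is the real invariant: each step changes $T(p,q)=T(p_m, q_m)$ to $T(p_m - 2p_{m-1}, q_m - 2q_{m-1})$, and modulo $2$ the first coordinate is unchanged, so every torus knot in the sequence has first parameter $\equiv p \pmod 2$ — in particular the terminal unknot $T(\ell,1)$ satisfies $\ell \equiv p \pmod 2$. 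This immediately pins down $\ell \bmod 2$.

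Then the remaining content is to show $\ell \in \{k, k+1\}$, i.e.\ that $\ell$ equals $k$ or $k+1$ and the parity constraint selects which. For this I would show the first partial quotient $c_0 = k$ is nearly preserved: steps only modify the tail of the continued fraction, so $c_0$ stays equal to $k$ until the very last stage when the expansion has length $\le 2$. When the expansion reaches $[c_0, c_1]$ with $c_0 = k$, one more sequence of steps on $[k, c_1]$ (with $c_1 \ge 2$) reduces it: subtracting $2$ repeatedly from $c_1$ gives $[k, c_1 - 2j]$, and when $c_1 - 2j$ hits $1$ we get $[k+1]$, when it hits $0$ we get $[k]$ (via the identity, absorbing to $[k-1+\text{?}]$ — actually $[k,0]=[\,]$ is degenerate, so one must be careful: $[k, 2] \to [k,0]$ collapses to just $[k]$ essentially, i.e. to $T(k,1)$). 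So depending on whether $c_1$ is even or odd we land on $[k]$ or $[k+1]$; combined with the parity fact from the previous paragraph, and checking consistency, we get exactly \eqref{ell}. I would also separately dispatch the case where the reduction reaches length-$1$ directly from length $\ge 3$ via the collapsing identities, confirming $c_0=k$ is untouched in those collapses.

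The main obstacle I anticipate is the careful bookkeeping of the canonicalization identities in the endgame: when the last entry becomes $2$ or $3$, the step produces a non-canonical expansion that must be rewritten, and this rewriting can propagate — e.g.\ $[c_0,\dots,c_{j},1,0]$-type cascades — potentially disturbing earlier entries including $c_0$ if the expansion is short. I would need to verify that $c_0 = k$ genuinely survives all such cascades (which it does, since $k \ge 0$ and the only way to disturb $c_0$ is through a chain of collapses reaching index $0$, which can only lower the length to $1$, i.e.\ produce exactly $[k]$, not change $k$ itself). Making this rigorous — showing the reduction never outputs $[\ell]$ with $\ell \notin \{k, k+1\}$ — is the crux, and I would do it by inducting on $m$ with the inductive hypothesis phrased in terms of both $c_0$ and the parity of $p$, so that the short-length base cases $m \in \{0,1,2\}$ can be checked by hand and the inductive step only ever removes entries from the tail.
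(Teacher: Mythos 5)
Your proposal takes essentially the same route as the paper: translate pinch moves into steps on the continued fraction $[k,c_1,\dots,c_m]$ via Proposition~\ref{pinch-step}, observe that steps only modify the tail so the terminal single-entry expansion must be $[k]$ or $[k+1]$, and then use parity preservation of the first torus-knot parameter to decide which of the two occurs. (One small wording slip: a collapse cascade can also end via $[k,1]=[k+1]$, so it is not true that collapses reaching index $0$ "produce exactly $[k]$" — but you already allow $[k+1]$ in the preceding sentence, so the argument is unaffected.)
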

\begin{proof}
Since $p = qk+a$, it follows that the first entry in the continued fraction expansion of $\frac{p}{q}$ is $k$. We represent the canonical continued fraction expansion as follows: $\frac{p}{q} = [k, c_1, c_2, \ldots, c_m]$. By Proposition~\ref{pinch-step}, the value of $\ell$ is determined by applying steps to the continued fraction until the continued fraction is first reduced to an expansion with a single entry. However, considering the effect that a step has on the continued fraction (Definition~\ref{StepDefinition}) and the possible changes that occur when putting the resulting expansion in canonical form (Equation~\ref{EquationMakingTheContinuedFractionExpansionCanonical}), there are only two possibilities for what this terminal continued fraction expansion can be. It must be either $[k]$ or $[k+1]$. These two possible continued fractions correspond to the rational numbers $\frac{k}{1}$ or $\frac{k+1}{1}$, respectively. These rational numbers, in turn, correspond to the torus knots $T(k, 1)$ and $T(k+1,1)$, respectively. Since we know that the parities of the torus knot parameters are preserved by pinch moves, the resulting torus knot is uniquely determined by the parity of $p$. In particular, if  $p\equiv k\pmod{2}$, then the associated torus knot is $T(k,1)$, and hence $\ell = k$. On the other hand, if  $p\not\equiv k \pmod{2}$, then the associated torus knot is $T(k+1,1)$, and hence $\ell = k+1$. 
\end{proof}

In general, the surface $F_{p,q}$ does not realize $\gamma_4(T(p,q))$~\cite{JabukaVC:2018, Lobb:2019}. However, there exists an infinite family of torus knots for which $F_{p,q}$ does realize $\gamma_4$. Such families are given in \cite{Batson:2014}, as well as the following result.

\begin{theorem}~\cite{JabukaVC:2018}\label{AllPositivePinchMoves}
Let $p,q >1$ be relatively prime positive integers such that $p$ is even. If every pinch move in the construction of $F_{p,q}$ is a positive pinch move, then $\gamma_4(T(p,q)) = \beta_1(F_{p,q})$. 
\end{theorem}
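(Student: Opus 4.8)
The plan is to establish a matching lower bound for $\gamma_4(T(p,q))$ that equals $\beta_1(F_{p,q})$, so that the evident upper bound $\gamma_4 \le \beta_1(F_{p,q})$ (which holds because $F_{p,q}$ is an honest nonorientable surface in $B^4$ bounded by $T(p,q)$) becomes an equality. The natural source for such a lower bound is an Ozsv\'ath--Szab\'o--Stipsicz-type obstruction coming from the $d$-invariant (correction term) of the double branched cover $\Sigma(T(p,q))$, which is the Brieskorn sphere $\Sigma(2,p,q)$ when $p$ is even: one has an inequality of the shape $\gamma_4(K) \ge \text{(something built from } d(\Sigma(K)) \text{ and the signature)}$, and for $T(p,q)$ this quantity can be computed explicitly. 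So the first step is to quote the relevant lower-bound theorem for $\gamma_4$ in terms of $\Sigma(2,p,q)$ and write down what it evaluates to.

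Second, I would run the pinch-move construction in parallel on the level of double branched covers. Each positive pinch move on $T(p,q)$ gives a nonorientable cobordism realized by attaching a single band; passing to double covers, a band attachment lifts to a single two-handle attachment, so the whole surface $F_{p,q}$ lifts to a four-manifold $W$ built from $\Sigma(2,p,q) \times [0,1]$ by attaching $\beta_1(F_{p,q})$ two-handles, with the other boundary component a lens space $L$ (the double cover of the unknot $T(\ell,1)$, capped by the lift of the disk). The key point is that the \emph{positivity} of every pinch move controls the framings and signs of these two-handles — this is exactly the role of Proposition~\ref{pinch-step} and Theorem~\ref{PositiveNegativePinchMove}: a positive pinch corresponds to a \emph{step} $[c_0,\dots,c_m]\mapsto[c_0,\dots,c_m-2]$ with $m$ odd, which in turn pins down the sign of the linking form / intersection form contribution. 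I would show that under the all-positive hypothesis, $W$ is negative definite (or has a definite intersection form of the expected rank), so that $d$-invariant inequalities apply across the cobordism without loss.

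Third, I would combine the two halves: the $d$-invariant of the lens-space end is computable by the standard recursion, the intersection form of $W$ has rank exactly $\beta_1(F_{p,q})$ and is definite, and plugging into the correction-term inequality yields $\gamma_4(T(p,q)) \ge \beta_1(F_{p,q})$, matching the upper bound. The arithmetic on the lens-space side should dovetail with Theorem~\ref{p0}, which already identifies the terminal unknot $T(\ell,1)$ and hence the lens space $L(\ell,1)$ whose $d$-invariants enter the count.

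The main obstacle I expect is controlling the definiteness and the framing data of the lifted cobordism $W$: a single band move lifts to a two-handle, but its framing is only determined once one tracks the behavior of the branch locus carefully through the pinch, and one must verify that the accumulated intersection form really is negative definite of the right rank precisely because \emph{every} pinch is positive (a single negative pinch would break this). Equivalently, on the continued-fraction side, one must check that repeatedly applying a \emph{step} with the odd-length condition produces the "plumbing is negative definite" configuration; this is where the bookkeeping of Equation~\ref{EquationRecursiveDefinitionOfTheithConvergent} and Equation~\ref{EquationMakingTheContinuedFractionExpansionCanonical}, together with Proposition~\ref{MagnitudeOfRandS}, has to be pushed through carefully. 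Once definiteness is in hand, the correction-term estimate is essentially a black box and the two bounds pinch together.
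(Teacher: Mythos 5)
This theorem is quoted here from the companion paper \cite{JabukaVC:2018}; the present paper gives no proof of its own, so there is no in-text argument to compare against directly. To the best of my knowledge, the cited proof does \emph{not} go through $d$-invariants of double branched covers. It instead uses the Ozsv\'ath--Szab\'o--Stipsicz lower bound $\gamma_4(K) \geq \upsilon(K) - \tfrac{1}{2}\sigma(K)$ (with $\upsilon = \Upsilon_K(1)$), together with a computation showing that under a single \emph{positive} pinch move $T(p,q)\to T(r,s)$ the quantity $\upsilon - \tfrac{1}{2}\sigma$ drops by exactly $1$. Iterating this over the $\beta_1(F_{p,q})$ pinch moves down to an unknot, where the quantity vanishes, yields the matching lower bound $\gamma_4 \geq \beta_1(F_{p,q})$. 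Positivity enters purely through the arithmetic governing the signature and $\Upsilon$ recursions for torus knots; no 4-manifold definiteness statement is needed.

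Your proposal instead follows Batson's original strategy for $T(2k,2k-1)$: lift $F_{p,q}$ to the double branched cover and run a $d$-invariant inequality across the resulting cobordism of Brieskorn spheres. That is a genuinely different route, and as written it has real gaps. First, you never argue that Batson's bound $\gamma_4(K) \geq \tfrac{1}{2}\sigma(K) - d(\Sigma_2(K))$ actually evaluates to $\beta_1(F_{p,q})$ under the all-positive hypothesis; sharpness of this particular bound across the whole family is exactly what would need to be proved, and it is not clear it holds. Second, the link you posit between positivity of a pinch (a sign condition on $p-2t$ and $q-2h$) and negative-definiteness of the lifted 2-handle is asserted rather than derived; pinning down the framing of the lifted handle and showing the accumulated form is definite is the entire content of such an argument, not a loose end. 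Third, even granted definiteness, you would still need a rank-by-rank $d$-invariant computation on the lens-space end to recover precisely $\beta_1(F_{p,q})$ rather than a weaker bound. In short, this is a plausible program in the Batson style, but it is not a proof, and it does not track the route actually taken in \cite{JabukaVC:2018}.
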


\section{Two surface constructions in $S^3$ and the value of $\gamma_3(T(p,q))$}~\label{3D-constructions}

In this section, we  tweak the surface construction from Section~\ref{surface-construction} in order to obtain a nonorientable surface embedded in $S^3$ with boundary $T(p,q)$. We must consider the cases when $pq$ is even and odd separately.  The two constructions we describe here realize the nonorientable three-genus of torus knots, as computed by Teregaito~\cite{Teragaito:2004}. 
 
\begin{figure}   
\centerline{\includegraphics[width=4.5cm]{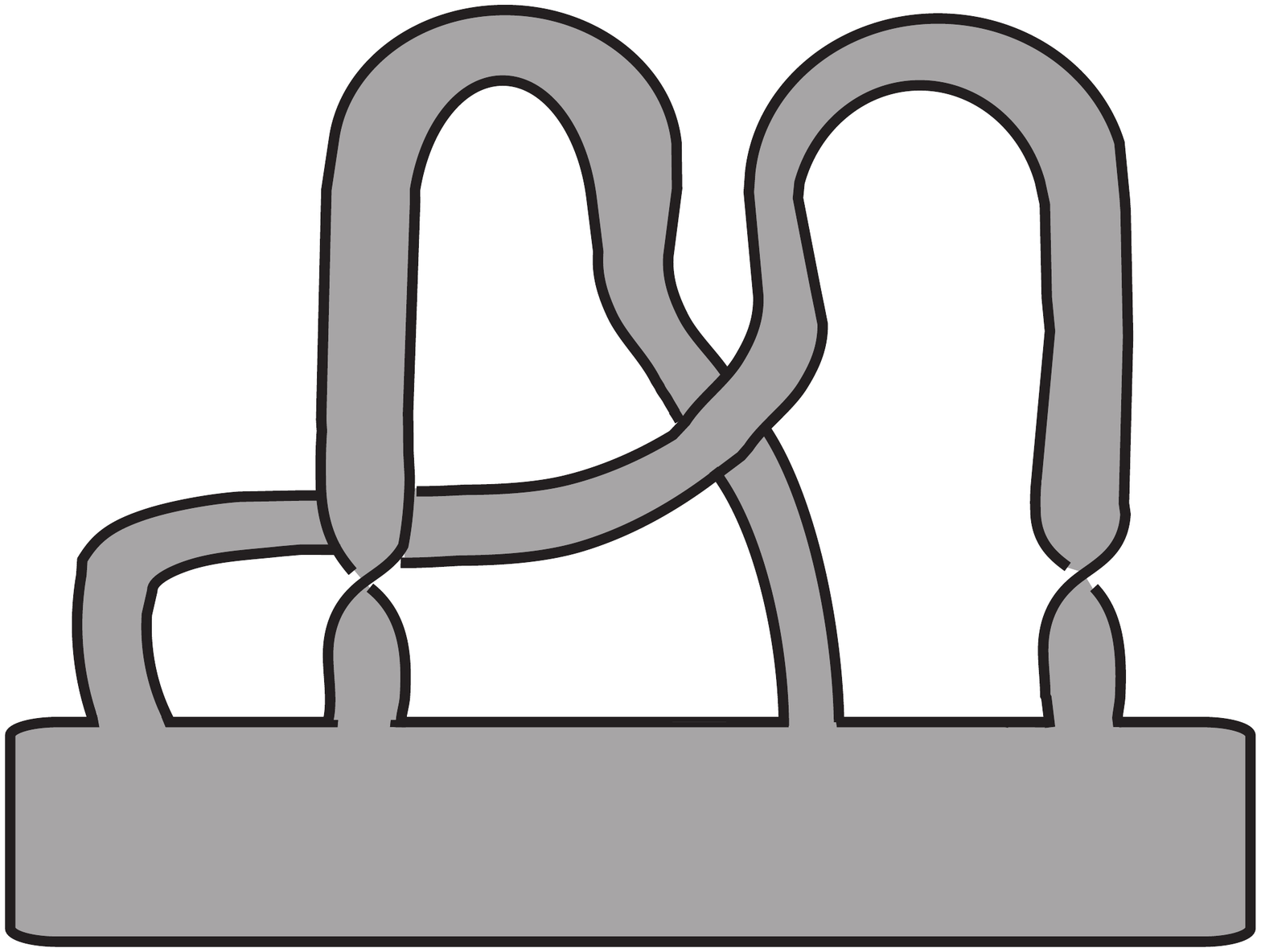} \qquad 
\includegraphics[width=3cm]{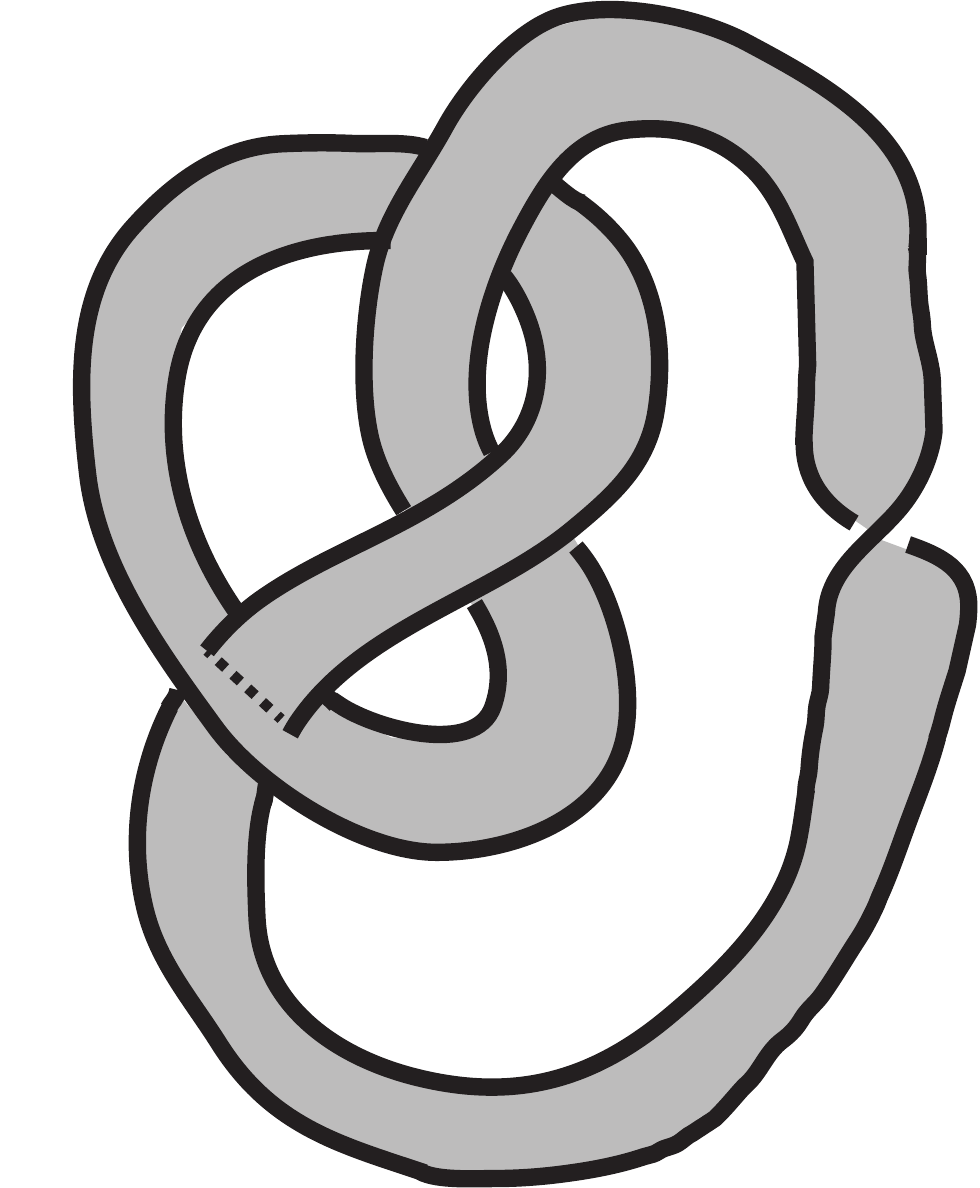}}
\vspace*{8pt}
\caption{These two surfaces both have boundary equal to $T(4,3)$. On the left, we have the nonorientable surface $\Sigma_{4,3}$ which realizes $\gamma_3(T(4,3)) = 2$. On the right, we have the nonorientable surface $F_{4,3}$ (immersed in $S^3$) which realizes $\gamma_4(T(4,3))= 1$.  }
\label{SurfaceFigure-F-3-4}
\end{figure}


\subsection{A surface $\Sigma_{p,q}$ in $S^3$, when $pq$ is even}~\label{Sigma-p-q}

We first consider the case that $pq$ is even, and we set $p$ to be even and $q$ odd. Consider the torus knot $T(p,q)$ to be embedded on the torus $T^2 \times \{0\} \subset T^2\times[0,1]$ in such a way that the torus knot wraps $p$ time about the longitude of the torus and $q$ times about the meridian of the torus. 

Recall that the parity of the torus knot parameters are preserved with pinch moves (see Lemma~\ref{LemmaOnTheProofOfBatsonsPinchingMoveFormula}). Therefore after applying pinch moves to $T(p,q)$ and obtaining an unknot $T(\ell, 1)$, it must be the case that $\ell$ is even. One can check that a pinch move applied to $T(\ell, 1)$ will result in the torus knot $T(\ell-2,1)$. Continuing in this way, we will eventually reach the unknot $T(0,1)$. In this case, observe that $T(0,1)$  bounds a disk in $S^3$ that embeds into the complement of the cobordism, since the cobordism itself is contained in $T^2 \times [0,1]$, and the torus knot $T(0,1)$ wraps once about the meridian of $T^2 \times \{1\} \subset T^2 \times [0,1]$.  So the cobordism can be capped off with a disk in $S^3$. Thus, we have a nonorientable surface in $S^3$ with boundary $T(p,q)$ and with first Betti number equal to the number of pinch moves performed in reducing $T(p,q)$ to $T(0,1)$. We denote the surface by $\Sigma_{p,q}$. Moreover, by Proposition~\ref{pinch-step}, we have:
$$\beta_1(\Sigma_{p,q}) = \text{\small number of steps needed to reduce the continued fraction expansion of $\frac{p}{q}$ to 0}.$$

\begin{example}
Consider the torus knot $T(4,3)$. Applying pinch moves to the knot, we obtain the following sequence:
$$T(4,3) \xrightarrow{\text{ Pinch}} T(2,1) \xrightarrow{\text{ Pinch}} T(0,1)$$
Therefore $T(4,3)$ bounds a nonorientable surface $\Sigma_{4,3} \subset S^3$ with $\beta_1(\Sigma_{4,3}) = 2$. The surface is illustrated on the left of Fig.~\ref{SurfaceFigure-F-3-4}.\\

\end{example}


\subsection{A surface $G_{p,q}$ in $S^3$, when $pq$ is odd}~\label{G-p-q}

Finally, we consider the case that $pq$ is odd. We take $p>q$. Unfortunately, the construction in $S^3$ we just described for the case $pq$ even does not work here. The pinch moves applied to $T(p,q)$ will terminate in an unknot $T(\ell,1)$, as before, but $\ell$ will be odd, and further pinch moves will reduce this unknot to $T(1,1)$. The associated cobordism from $T(p,q)$ to $T(1,1)$ in $T^2\times [0, 1]\subset S^3$ cannot be capped off with a disk in $S^3$ without causing the surface to be immersed. Hence, we must proceed differently.

We follow the construction outlined in~\cite{Teragaito:2004} (see page 229). Consider the  torus knot $T(p,q)$ as lying on the surface of a solid torus $V\subset S^3$ with complementary solid torus $W$.   Let $\gamma$ be an arc as in Fig.~\ref{Gamma}, connecting two adjacent strands of the torus knot on the surface of the solid torus. Observe that $\partial \gamma$ splits the torus knot into two arcs, call them $A_1$ and $A_2$. Gluing $\gamma$ to either of these arcs creates a new torus knot: $T_1 = A_1 \cup \gamma$ and $T_2 = A_2 \cup \gamma$.  See Fig.~\ref{Gamma}. 

The values of the torus knot parameters for $T_1$ and $T_2$ are determined easily from the continued fraction expansion of $\frac{p}{q}$, as described  in~\cite{Teragaito:2004}. We review the result here.

\begin{proposition}\cite{Teragaito:2004}
Let $T(p,q)$ be a torus knot with $pq$ odd, and let $\gamma$ be an arc as in Fig.~\ref{Gamma}, connecting two adjacent strands of the torus knot such that the boundary of $\gamma$ splits the knot into two arcs: $A_1$ and $A_2$. The two resulting torus knots $T_1 = A_1 \cup \gamma$ and $T_2 = A_2 \cup \gamma$ have the following parameters (listed as unordered pairs):
$$T_1 = T(p_{m-1}, q_{m-1}) \hspace{.3cm} \text{ and }   \hspace{.3cm}T_2 = T(p-p_{m-1}, q - q_{m-1})$$
where $\frac{p_{m-1}}{q_{m-1}}$ is the $(m-1)^{st}$ convergent of $\frac{p}{q} = [c_0, c_1, \ldots c_m]$. 
\end{proposition}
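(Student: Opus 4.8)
The plan is to realize $T_1$ and $T_2$ as embedded curves on the Heegaard torus $\partial V$ and to pin them down by computing their classes in $H_1(\partial V;\mathbb Z)\cong\mathbb Z^2$. Choose the basis for which $[T(p,q)]$ corresponds to $(p,q)$. Since $\gamma$ may be taken to lie on $\partial V$, each of $T_1=A_1\cup\gamma$ and $T_2=A_2\cup\gamma$ is a simple closed curve on the torus, and such a curve is isotopic on $\partial V$ to the $(a,b)$-curve exactly when its homology class is $\pm(a,b)$ with $\gcd(a,b)=1$. So the proposition amounts to showing that, as an unordered pair, $\{[T_1],[T_2]\}=\{(p_{m-1},q_{m-1}),\,(p-p_{m-1},q-q_{m-1})\}$, together with primitivity of both classes. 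Primitivity comes for free from the convergent identity $p_mq_{m-1}-p_{m-1}q_m=(-1)^{m-1}$ already used in the proof of Proposition~\ref{pinch-step}: it gives $\gcd(p_{m-1},q_{m-1})=1$, and rewriting it as $(p-p_{m-1})q_{m-1}-(q-q_{m-1})p_{m-1}=(-1)^{m-1}$ gives $\gcd(p-p_{m-1},q-q_{m-1})=1$.

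To compute the classes I would use two inputs. First, orienting the arcs so that $T(p,q)\cup\gamma$ becomes an oriented theta-graph on $\partial V$, the three simple cycles satisfy (up to overall sign) $[T_1]+[T_2]=[T(p,q)]=(p,q)$; so it suffices to compute $[T_1]$. Second, model $T(p,q)$ on the flat torus as the image of $t\mapsto(pt,qt)$ for $t\in[0,1]$. Its intersections with a fixed meridian occur at the parameters $t=k/p$, $k=0,\dots,p-1$, and consecutive intersection points along that meridian correspond to parameters differing by $q^{-1}/p$ modulo $1$, where $q^{-1}$ denotes the residue of the inverse of $q$ mod $p$ in $\{1,\dots,p-1\}$. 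Hence joining two adjacent strands by $\gamma$ means precisely that the two endpoints of $\gamma$ are knot points whose parameters differ by $t^{\ast}/p$ with $t^{\ast}=q^{-1}\bmod p$. Consequently $A_1$ winds $t^{\ast}$ times in the first coordinate, and closing the loop with $\gamma$ (which completes $\frac1p$ of a turn in the second coordinate) yields $[T_1]=\bigl(t^{\ast},\ \tfrac{qt^{\ast}-1}{p}\bigr)$, an integral class precisely because $qt^{\ast}\equiv 1\pmod p$.

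It then remains to identify $\bigl(t^{\ast},\tfrac{qt^{\ast}-1}{p}\bigr)$ with one of the two claimed classes. From $pq_{m-1}-p_{m-1}q=(-1)^{m-1}$ one gets $p_{m-1}q\equiv(-1)^m\pmod p$, so $t^{\ast}=q^{-1}\bmod p$ equals $p_{m-1}$ when $m$ is even and $p-p_{m-1}$ when $m$ is odd; substituting back into the same identity shows that $\tfrac{qt^{\ast}-1}{p}$ is then $q_{m-1}$ or $q-q_{m-1}$ accordingly. Combining with $[T_2]=(p,q)-[T_1]$ gives $\{[T_1],[T_2]\}=\{(p_{m-1},q_{m-1}),(p-p_{m-1},q-q_{m-1})\}$ in every case, which is the assertion (the labelling $T_1\leftrightarrow T_2$, and the ordering within each pair, being immaterial — hence the phrase ``as unordered pairs'').

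The step I expect to be the real work is the middle one: making rigorous the passage from ``$\gamma$ connects adjacent strands'' to ``the parameters of $\partial\gamma$ differ by $t^{\ast}/p$'', and fixing orientation conventions so that $[T_1]+[T_2]$ comes out to $[T(p,q)]$ rather than some other signed combination. One should also check that the resulting pair is independent of which adjacent pair of strands is used — this follows from the cyclic symmetry of $T(p,q)$ that permutes the strands — and dispose of degenerate configurations (an endpoint of $\gamma$ lying on the chosen meridian, or $\gamma$ meeting the interior of $A_1$ or $A_2$) by a small isotopy of $\gamma$ on $\partial V$; none of these affects the homology classes.
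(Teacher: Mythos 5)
The paper does not prove this proposition; it simply states the formula with a citation to Teragaito's paper, so there is no in-paper argument to compare against. That said, your blind proof is correct and self-contained. Computing $[T_1]$ in $H_1(\partial V)\cong\mathbb Z^2$ and then obtaining $[T_2]$ from $[T_1]+[T_2]=[T(p,q)]$ is exactly the right framework, and your key computation is sound: adjacent strands of the $(p,q)$-curve $t\mapsto(pt,qt)$ across a fixed meridian do differ in parameter by $\pm q^{-1}/p$ modulo $1$, giving $[T_1]=\bigl(t^{\ast},\tfrac{qt^{\ast}-1}{p}\bigr)$ with $t^{\ast}\equiv q^{-1}\pmod p$; and the identity $pq_{m-1}-p_{m-1}q=(-1)^{m-1}$ indeed yields $t^{\ast}=p_{m-1}$ and $\tfrac{qt^{\ast}-1}{p}=q_{m-1}$ for $m$ even, and $t^{\ast}=p-p_{m-1}$, $\tfrac{qt^{\ast}-1}{p}=q-q_{m-1}$ for $m$ odd, so the unordered pair comes out right in both parities. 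Two small points you should make explicit when writing this up. First, to apply $t^{\ast}\in\{1,\dots,p-1\}$ you need $1\le p_{m-1}\le p-1$; this uses the paper's convention that $p>q$ when $pq$ is odd (so $c_0\ge1$, hence $p_{m-1}\ge1$) together with $c_m\ge2$ (so $p_{m-1}<p_m=p$). Second, the sign of $\gamma$'s contribution ($-1/p$ rather than $+1/p$) should be pinned to your choice $qt^{\ast}\equiv1\pmod p$; the opposite adjacent strand gives the complementary arc and hence the other element of the pair, so the unordered result is unchanged. These are exactly the bookkeeping issues you already flagged as ``the real work,'' and they do resolve as you anticipate.
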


 Observe that $p_{m-1}$ and $q_{m-1}$ must have opposite parity since $p$ and $q$ are both odd and satisfy $pq_{m-1} - p_{m-1}q = (-1)^{m-1}.$ Hence both of the torus knots $T_1$ and $T_2$ have an even parameter. Let $a$ denote the even integer from the set $\{p_{m-1}, q_{m-1}\}$, and let $b$ denote the odd integer from that set.  By our previous work in Section~\ref{Sigma-p-q}, the torus knot $T_1$ bounds a nonorientable surface $\Sigma_{a,b}$ in the standard solid torus $V$. Let $c$ denote the even integer in the set $\{p-p_{m-1}, q-q_{m-1}\}$, and let $d$ denote the odd integer in the set. Then $T_2$ bounds a nonorientable surface $\Sigma_{c,d}$ in the complementary solid torus $W\subset S^3$. These two surfaces intersect at $\gamma$. Now glue the two surfaces together along $\gamma$. Let us call the result $G_{p,q} = \Sigma_{a,b} \cup_\gamma \Sigma_{c,d}$. Observe that $\partial G_{p,q} = T(p,q)$. It follows that: 
$$\beta_1(G_{p,q}) = \beta_1(\Sigma_{a,b}) + \beta_1(\Sigma_{c,d}),$$
where  $\{a,b\} = \{p_{m-1}, q_{m-1}\}$ and $\{c,d\} = \{p - p_{m-1}, q - q_{m-1}\}.$

\begin{example}
Consider the torus knot $T(5,3)$. Adding in the arc $\gamma$ splits the torus knot into the two torus knots $T(2,1)$ and $T(2,3)$. See Fig.~\ref{Gamma}. The surface $G_{5,3}$, then, is given by $G_{5,3} = \Sigma_{2,1} \cup_\gamma \Sigma_{2,3}$. See Fig.~\ref{G-5-3}. Both $T(2,1)$ and $T(2,3)$  reduce to $T(0,1)$ with just one pinch move. So we have 
$$\beta_1(G_{5,3}) = \beta_1({\Sigma_{2,1}}) + \beta_1({\Sigma_{2,3}}) =  1+1 = 2.$$ 

\end{example}

\begin{figure}  
\centerline{\includegraphics[width=8cm]{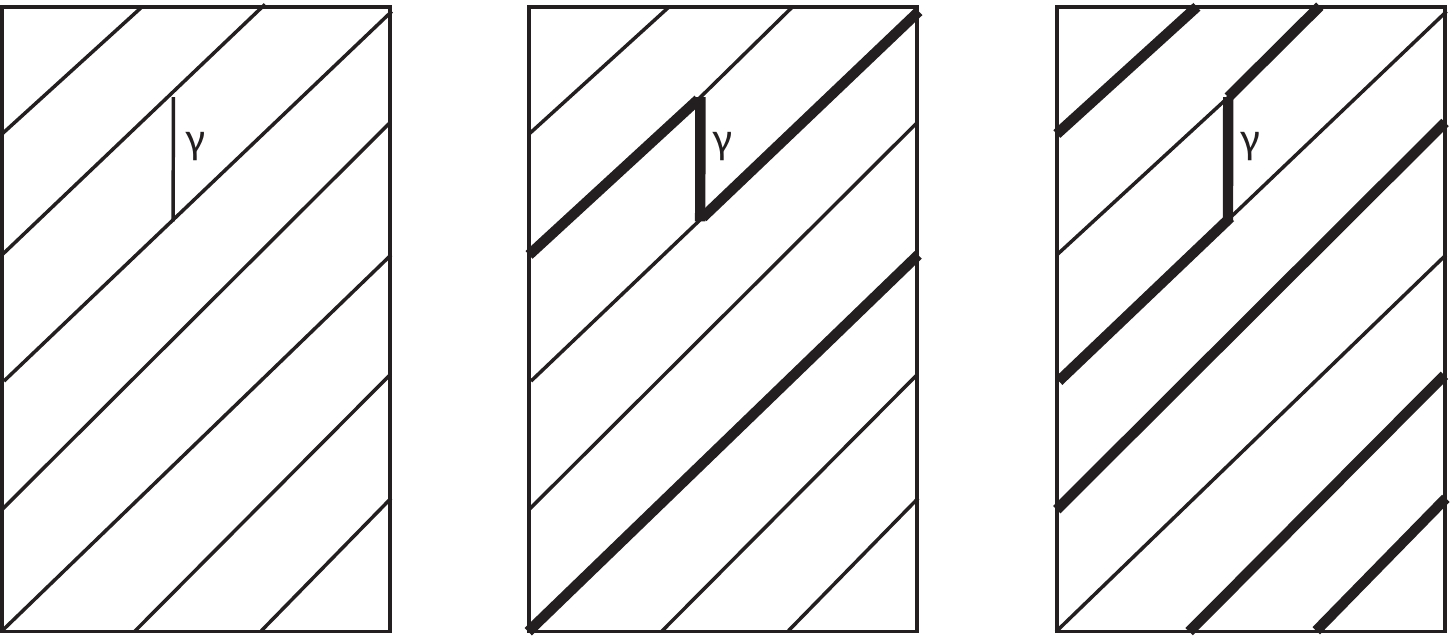}}
\vspace*{8pt}
\caption{Beginning with the torus knot $T(5,3)$ at left, we add an arc $\gamma$ connecting two adjacent strands. The endpoints of $\gamma$ split $T(5,3)$ into two arcs. Gluing $\gamma$ to each of these arcs, in turn, creates two  torus knots: $T(2,1)$ (at center) and $T(3,2)$ (at right).   }
\label{Gamma}
\end{figure}

\begin{figure}   
\centerline{\includegraphics[width=4cm]{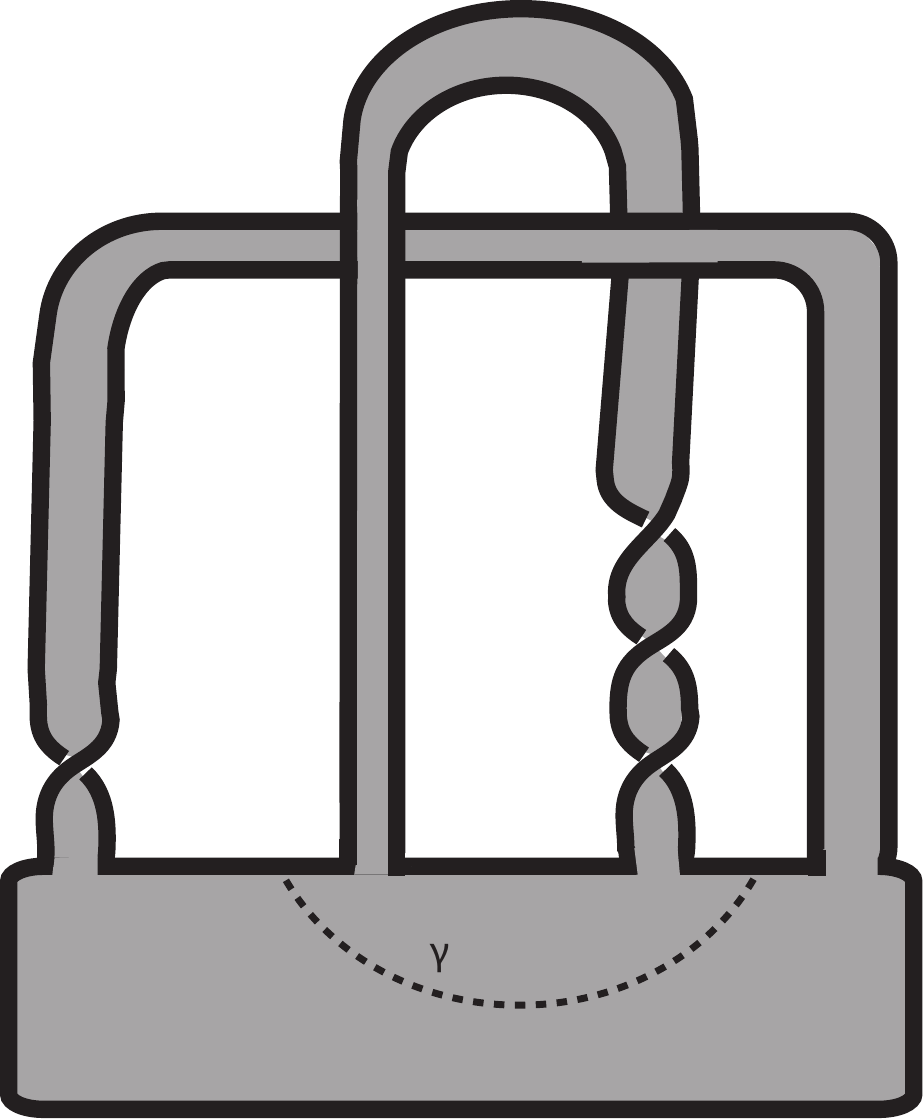}}
\vspace*{8pt}
\caption{The nonorientable surface $G_{5,3}$ with boundary $T(5,3)$ realizes $\gamma_3(T(5,3)) = 2$.  Observe that if the surface is cut along the curve $\gamma$, then the surface breaks into two surfaces, one with boundary $T(2,3)$ and the other with boundary $T(2,1)$.  }\label{G-5-3}
\end{figure}


\subsection{The nonorientable three-genus of torus knots}
The two surface constructions described above --  $\Sigma_{p,q}$ (if $pq$ is even) and $G_{p,q}$ (if $pq$ is odd) -- realize the value of $\gamma_3(T(p,q))$. This follows from work of Teragaito~\cite{Teragaito:2004}. Teragaito computed $\gamma_3$ for torus knots $T(p,q)$ as a function of steps on continued fractions. Before stating his results, we make a definition. The following function was originally defined by Bredon--Wood in (see Section 10 of~\cite{BredonWood:1969})  and has received other mutually equivalent descriptions (see Theorem 6.1 of~\cite{BredonWood:1969} and also page 225 of~\cite{Teragaito:2004}), but the definition below is particularly useful in our setting.  

\begin{definition}
Suppose that $a,b>0$ are relatively prime integers with $a$ even. We define $N(a,b)$ to be the number of successive steps needed to reduce the continued fraction expansion of $\frac{a}{b}$ to 0. 
\end{definition}

With this preliminary in place, we can now state Teragaito's results on nonorientable three genus of torus knots. 

\begin{theorem}\label{gamma3Values}~\cite{Teragaito:2004} Let $p,q>1$ be relatively prime. If $pq$ is even, let $p$ be even. If $pq$ is odd, let $p>q$.  Let $x$ be the unique solution of $xq\equiv -1\,(\text{mod }p)$ that lies in $\{1,\dots, p-1\}$. Then 
$$\gamma_3(T(p,q)) = 
\left\{
\begin{array}{cl}
N(p,q) & \quad ; \quad pq \text{ is even,}\cr  
N(pq-1,p^2) & \quad ; \quad pq \text{ is odd and } x \text{ is even,}\cr 
N(pq+1,p^2) & \quad ; \quad pq \text{ is odd and } x \text{ is odd.}\cr 
\end{array}
\right.
$$
\end{theorem}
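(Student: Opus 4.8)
The plan is to establish the displayed formula as two inequalities, $\gamma_3(T(p,q))\le(\text{RHS})$ and $\gamma_3(T(p,q))\ge(\text{RHS})$. The upper bound comes from the two surfaces constructed in this section. When $pq$ is even, $T(p,q)$ bounds $\Sigma_{p,q}\subset S^3$, and we already saw that $\beta_1(\Sigma_{p,q})$ equals the number of steps that reduce the continued fraction of $\tfrac{p}{q}$ to $0$; by the definition of the Bredon--Wood quantity this is $N(p,q)$, so $\gamma_3(T(p,q))\le N(p,q)$. When $pq$ is odd, $T(p,q)$ bounds $G_{p,q}=\Sigma_{a,b}\cup_\gamma\Sigma_{c,d}$ with $\beta_1(G_{p,q})=N(a,b)+N(c,d)$, where $\{a,b\}=\{p_{m-1},q_{m-1}\}$ and $\{c,d\}=\{p-p_{m-1},\,q-q_{m-1}\}$, the even entry of each pair listed first. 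So in the odd case the upper bound reduces to the purely arithmetic identity
$$
N(p_{m-1},q_{m-1})+N(p-p_{m-1},\,q-q_{m-1})=
\begin{cases}
N(pq-1,\,p^2) & \text{if } x \text{ is even},\\
N(pq+1,\,p^2) & \text{if } x \text{ is odd},
\end{cases}
$$
where $x$ is the unique element of $\{1,\dots,p-1\}$ with $xq\equiv-1\pmod p$ (so $x=t$ in the notation of Lemma~\ref{LemmaOnTheProofOfBatsonsPinchingMoveFormula}).

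To prove that identity I would argue by continued-fraction bookkeeping. Writing $\tfrac{p}{q}=[c_0,\dots,c_m]$, the proof of Proposition~\ref{pinch-step} already records how $p_{m-1},q_{m-1}$ and $p-p_{m-1},q-q_{m-1}$ sit inside this expansion, and the numerator-reversal identity $[c_m,c_{m-1},\dots,c_0]=\tfrac{p}{p_{m-1}}$ together with $pq_{m-1}-p_{m-1}q=(-1)^{m-1}$ lets one splice $[c_0,\dots,c_m]$ and its reverse into a single continued fraction for $\tfrac{p^2}{pq\mp1}$. Step counts add across this splicing, as long as one is careful at the moment a step lands on a non-canonical tail and Equation~\ref{EquationMakingTheContinuedFractionExpansionCanonical} must be applied; keeping track of those boundary cases is the delicate part. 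The parity of $x$ controls which of $p_{m-1}$ and $p-p_{m-1}$ is even, and that is exactly what selects the sign $pq-1$ versus $pq+1$.

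For the lower bound I would invoke Teragaito~\cite{Teragaito:2004}. His argument takes a nonorientable spanning surface $F$ for $T(p,q)$ realizing $\gamma_3$, uses the twisted $I$-bundle neighborhood $N(F)$ (whose boundary slope on $T(p,q)$ is even) to produce, after the corresponding Dehn surgery and capping of the horizontal boundary, a closed nonorientable surface of the same first Betti number in the filled manifold, which for a torus knot is a small Seifert fibered space. The minimal first Betti number of a nonorientable surface in such a manifold was computed by Bredon--Wood~\cite{BredonWood:1969} and is exactly the step-counting function $N$; matching the admissible boundary slopes of $F$ against the surgery data yields $N(p,q)$ when $pq$ is even, and when $pq$ is odd forces the $p^2$ in the denominator and the two sign cases $pq\mp1$, selected by the parity of $x$. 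Together with the upper bound this proves the theorem.

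I expect the main obstacle to be the continued-fraction identity above: collapsing $N(p_{m-1},q_{m-1})+N(p-p_{m-1},q-q_{m-1})$ to a single $N(pq\mp1,p^2)$ and correlating the sign with the parity of $x$ is a bare-hands manipulation of canonical-form continued fractions in which the non-canonical tails must be handled with care. The even case of the upper bound is immediate from the definition of $N$, and the lower bound in all cases is a citation of Teragaito, whose own argument rests on Bredon--Wood.
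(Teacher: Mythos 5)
The paper states this theorem as a citation to Teragaito~\cite{Teragaito:2004} and does not prove it---the only discussion is the brief remark afterward that $\beta_1(\Sigma_{p,q})$ and $\beta_1(G_{p,q})$ can be checked to equal the displayed values, with details deferred to Teragaito. Your sketch is consistent with that stance: the upper-bound half via $\Sigma_{p,q}$ (when $pq$ is even) and $G_{p,q}$ (when $pq$ is odd) reproduces the paper's Section~\ref{3D-constructions} discussion, and you correctly locate the genuinely hard ingredients---the identity $N(p_{m-1},q_{m-1})+N(p-p_{m-1},\,q-q_{m-1})=N(pq\mp1,\,p^2)$ correlated with the parity of $x$, and the lower bound via twisted $I$-bundle neighborhoods and the Bredon--Wood computation---in Teragaito's paper, which is exactly where this paper leaves them.
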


One can check that the surfaces $\Sigma_{p,q}$ (in the case that $pq$ is even) and $G_{p,q}$ (in the case that $pq$ is odd) have first Betti number equal to these values given above  (see ~\cite{Teragaito:2004} for details). Hence, the surfaces realize $\gamma_3(T(p,q))$.
 

\section{Comparing $\gamma_3$ and $\gamma_4$}~\label{compare3and4}
In this section, we use the surface constructions from the previous sections to prove Theorem~\ref{theorem}. In fact, Theorem~\ref{theorem} follows from the following result.

\begin{theorem}\label{comparing} Let $p, q > 1$ be relatively prime integers with $p$ even, and let $F_{p,q}$ be the nonorientable surface from Section~\ref{surface-construction}. Write $p = qk + a $, where $0< a < q$ and $k\geq0$. Then
$$\gamma_3(T(p,q))-\beta_1(F_{p,q})=
 \begin{cases}
\frac{k}{2} & \text{ if $k$ is even}, \\
\frac{k + 1}{2} & \text{ if $k$ is odd}.
\end{cases}
$$
\end{theorem}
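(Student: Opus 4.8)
The plan is to reduce everything to a statement about the function $N$ and the step operation on continued fractions, then compare the number of steps in two different termination conditions. Write the canonical continued fraction expansion $\frac{p}{q}=[k,c_1,\dots,c_m]$; since $p$ is even and $q$ is odd, $q>1$ so $m\geq 1$. By Proposition~\ref{pinch-step}, $\beta_1(F_{p,q})$ equals the number of steps needed to reduce $[k,c_1,\dots,c_m]$ to a single-entry expression, while $\beta_1(\Sigma_{p,q})=N(p,q)$ equals the number of steps needed to reduce it all the way to $0=[0]$; and by Theorem~\ref{gamma3Values} (the $pq$ even case) we have $\gamma_3(T(p,q))=N(p,q)=\beta_1(\Sigma_{p,q})$. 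So the whole theorem amounts to computing the difference
$$
\gamma_3(T(p,q))-\beta_1(F_{p,q})=\bigl(\text{steps to reduce }\tfrac{p}{q}\text{ to }0\bigr)-\bigl(\text{steps to reduce }\tfrac{p}{q}\text{ to a single entry}\bigr),
$$
which is exactly the number of additional steps needed to pass from the terminal single-entry expansion $[\ell]$ down to $[0]$.

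Next I would pin down that terminal single-entry expansion. By Theorem~\ref{p0}, the pinch-move process on $T(p,q)$ first reaches an unknot $T(\ell,1)$ with $\ell=k$ if $p\equiv k\pmod 2$ and $\ell=k+1$ if $p\not\equiv k\pmod 2$; equivalently the continued fraction $[k,c_1,\dots,c_m]$ reduces by steps to $[\ell]$ with this same $\ell$. Now I must count the steps from $[\ell]$ to $[0]$. A step sends $[n]\mapsto[n-2]$ for $n\geq 4$, sends $[3]\mapsto[1]$, sends $[2]\mapsto[0]$, and sends $[1]\mapsto[-1]$ (which we must instead read as having already terminated at $0$ — more precisely, canonical single-entry expansions are $[0]$ and integers $\geq 2$ together with the special convention $[1]$, so I should track the cases carefully). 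Since $p$ is even, $\ell$ and $k$ have the same parity as $p$ is even forces: if $k$ is even then $p=qk+a$ with $q$ odd means $p\equiv a\pmod 2$, so... here I need to be careful: $p$ even and $p=qk+a$ gives $a\equiv qk\equiv k\pmod 2$ since $q$ is odd, hence $p\equiv 0$, so $k\equiv a\pmod 2$; combined with Theorem~\ref{p0}, $\ell=k$ exactly when $p\equiv k$, i.e. when $k$ is even, and $\ell=k+1$ when $k$ is odd. Thus $\ell$ is always even, and $\ell=k$ or $\ell=k+1$ according to the parity of $k$.

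Finally, counting steps from $[\ell]$ with $\ell$ even down to $[0]$: each step lowers an even single entry by $2$, so it takes exactly $\ell/2$ steps to go from $[\ell]$ to $[2]$ to $[0]$ (indeed $[\ell]\to[\ell-2]\to\cdots\to[2]\to[0]$ is $\ell/2$ steps, with the last using $[2]=[\dots,2]$ reducing to $[0]$ via the identity $[c_0,\dots,c_{m-1},0]=[c_0,\dots,c_{m-1}]$, here trivially). Hence the difference is $\ell/2$, which equals $k/2$ when $k$ is even and $(k+1)/2$ when $k$ is odd, as claimed. I would also spend a line verifying the edge cases ($m$ small, or the terminal expansion reached before $[\ell]$ genuinely being the canonical single-entry form), since the canonicalization identities in Equation~\eqref{EquationMakingTheContinuedFractionExpansionCanonical} can momentarily produce non-canonical intermediate expressions; checking that the step-count is unaffected is the one genuinely fiddly point. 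The main obstacle, then, is not any deep idea but rather carefully reconciling the bookkeeping of steps across the canonical-form adjustments and making sure that $\beta_1(\Sigma_{p,q})$, $\beta_1(F_{p,q})$, and $\ell$ are all being counted with respect to the same normalization of continued fractions.
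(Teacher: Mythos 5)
Your proposal is correct and follows essentially the same route as the paper's: identify the terminal single-entry expansion $[\ell]$ via Theorem~\ref{p0}, observe that the constructions of $F_{p,q}$ and $\Sigma_{p,q}$ (equivalently, the step sequences from $\frac{p}{q}$) coincide up to that point, and count the remaining $\ell/2$ pinch moves (steps) from $[\ell]$ down to $[0]$. Your translation into continued-fraction step counts via Proposition~\ref{pinch-step} is interchangeable with the paper's direct pinch-move/surface language, and your explicit parity bookkeeping ($p$ even forces $\ell$ even, with $\ell=k$ iff $k$ even) simply unpacks the paper's terse ``the result follows.''
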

\begin{proof}

The constructions of surfaces $\Sigma_{p,q}$ (which realizes $\gamma_3$) and $F_{p,q}$ coincide at first. In both constructions, one performs pinch moves to the torus knot $T(p,q)$ until it first becomes unknotted. As discussed in Section~\ref{surface-construction}, the associated cobordism will be from $T(p,q)$ to $T(\ell, 1)$, where $\ell$ is an even integer given by the equation in Theorem~\ref{p0}. At this point, the construction of $F_{p,q}$ is finished by simply capping off with a disk embedded in $B^4$. Moreover, if $\ell = 0$, then $\Sigma_{p,q}$ is also concluded at this point by gluing a disk embedded in $S^3$ to $T(0,1)$. However, if $\ell$ is nonzero, then one must continue with the construction of $\Sigma_{p,q}$ by doing $\frac{\ell}{2}$ further pinch moves. Hence the construction of $\Sigma_{p,q}$ (which realizes $\gamma_3$) contains $\frac{\ell}{2}$ additional pinch moves compared to $F_{p,q}$, where the value of  $\ell$ is given by Theorem~\ref{p0}. The result follows. 
\end{proof}

Since $\gamma_4(T(p,q)) \leq \beta_1(F_{p,q})$, Theorem~\ref{theorem} immediately follows as a corollary of the above result.  To conclude, we give two families of torus knots where the difference $\gamma_3 - \gamma_4$ can be explicitly computed.

\begin{example}
The special case where $q=3$ is straightforward since the surface $F_{p,3}$ is a M\"obius band for all values of $p$. Hence $\gamma_4(T(p,3)) = \beta_1(F_{p,3}) = 1$. Write $p = 3k + a$ where $a = 1$ or $2$. If $p$ is even, the value of $\gamma_3(T(3,p))$ is easily computed using Theorem~\ref{gamma3Values}, and the difference $\gamma_3(T(p,3)) - \gamma_4(T(p,3))$ precisely the value stated in Theorem~\ref{theorem}. That is, 
$$\gamma_3(T(p,3)) - \gamma_4(T(p,3)) =   \begin{cases}
\frac{k}{2} & \text{ if $k$ is even}, \\
\frac{k + 1}{2} & \text{ if $k$ is odd}.
\end{cases}
$$


\end{example}

\begin{example}
Let $m,k$ be odd integers with $m>1$ and $k\geq 1$. Consider the family of torus knots
$ T(p,q) = T(km+1,m).$

The rational number $\frac{km+1}{m}$ has continued fraction expansion: $\frac{km+1}{m} = [k,m]$. Applying steps to this continued fraction expansion, we observe:

$$\frac{km+1}{m}=[k,m] \xrightarrow{\text{ Step}} [k,m-2] \xrightarrow{\text{Step}} [k,m-4] \xrightarrow{\text{Step}} \cdots \xrightarrow{\text{Step}}[k,1] = [k+1] = k+1   $$ 

Hence, a total of $\frac{m-1}{2}$ steps will reduce the fraction $\frac{km+1}{m}$ down to the integer $k+1$. Therefore, by Proposition~\ref{pinch-step}, the surface $F_{p,q}$ is constructed with $\frac{m-1}{2}$ pinch moves, and $\beta_1(F_{p,q}) = \frac{m-1}{2}$. Furthermore, observe that every pinch move in the construction of this surface was {\em positive} (Theorem~\ref{PositiveNegativePinchMove}), and therefore by Theorem~\ref{AllPositivePinchMoves}, it follows that $\gamma_4(T(p,q)) = \beta_1(F_{p,q}) = \frac{m-1}{2}$.  

An additional $\frac{k+1}{2}$ steps will reduce $[k+1]$ down to $[0]$. Therefore by Theorem~\ref{gamma3Values}, $\gamma_3(T(p,q))  =  \frac{m-1}{2} + \frac{k+1}{2}$. It follows that $(\gamma_3 - \gamma_4) (T(p,q)) = \frac{k+1}{2}$. 

\end{example}

\section*{Acknowledgments}
We wish to thank Josh Batson for helpful email correspondence with the second author. The second author also thanks Robert Lipshitz for asking good questions that led to further insights. The first author was partially supported by a grant from the Simons Foundation, Award ID 524394, and a grant from the National Science Foundation, DMS-1906413.

\end{document}